\definecolor{antiquefuchsia}{rgb}{0.57, 0.36, 0.51}
\definecolor{azure}{rgb}{0.0, 0.5, 1.0}
\theoremstyle{plain}
\newtheorem{theorem}{Theorem}
\newtheorem{proposition}[theorem]{Proposition}
\newtheorem{lemma}[theorem]{Lemma}
\theoremstyle{definition}
\newtheorem{definition}[theorem]{Definition}
\newtheorem{remark}[theorem]{Remark}
\numberwithin{theorem}{section}
\numberwithin{equation}{section}
\newcommand{\N}{\mathbb{N}} 
\newcommand{\R}{\mathbb{R}} 
\newcommand{\A}{\mathbb{A}} 
\newcommand{\B}{\mathcal{B}} 
\newcommand{\I}{\mathbb{I}} 
\newcommand{\X}{\mathcal{X}}
\newcommand{\J}{\mathbf{J}}
\newcommand{\loc}{\mathrm{loc}}
\renewcommand{\min}{\mathrm{min}}
\renewcommand{\max}{\mathrm{max}}
\renewcommand{\div}{\mathrm{div}}
\DeclareMathOperator{\supp}{supp}
\DeclareMathOperator{\diam}{diam}
\newcommand{\vertiii}[1]{{\left\vert\kern-0.25ex\left\vert\kern-0.25ex\left\vert #1 
    \right\vert\kern-0.25ex\right\vert\kern-0.25ex\right\vert}}
\title[BVM for the Calder{\'o}n problem with p.w.\,constant conductivities]{A Bernstein--von-Mises theorem for the Calder{\'on} problem with piecewise constant conductivities}
\author{Jan Bohr}
\address{University of Cambridge\footnote{{\it Department of Pure Mathematics and Mathematical Statistics}, Wilberforce Road,  Cambridge CB3 0WB, UK}}
\date{\today}
\email{bohr@maths.cam.ac.uk}
\begin{document}

\maketitle

\begin{abstract}
This note considers a finite dimensional statistical model for the Calder{\'o}n problem with piecewise constant conductivities. In this setting it is shown that injectivity of the forward map and its linearisation suffice to prove the invertibility of the information operator, resulting in a Bernstein--von-Mises theorem and optimality guarantees for estimation by Bayesian posterior means.
\end{abstract}

\section{Introduction}
This article is concerned with the statistical analysis of nonlinear inverse problems modelled on parameter spaces $\Theta\subset \R^D$ ($D\in \N$). For {\it mildly ill-posed} problems, recent years have seen great progress in establishing theoretical guarantees for estimation and uncertainty quantification in a high-dimensional setting, where $D\rightarrow \infty$, as the number of measurements increases; see e.g.~the articles \cite{KLS16,GK18,NVW18,MNP19a,MNP21,GN20,NW20} and \cite{
MNP20,
NiPa21,NW20,BoNi21}, which analyse inverse parameter identification problems for several representative partial differential equations (PDE). In the framework of Bayesian inverse problems it has been of particular interest to gain an understanding of the average `posterior landscape' and the latter group of references addresses this issue by approximating posteriors with simpler types of distributions. Key conditions here include invertibility results for the infinite-dimensional information operator to obtain Bernstein--von-Mises type approximations by Gaussians \cite{MNP21, NiPa21}, or the weaker requirement of {\it polynomial-in-$D$} eigenvalue lower bounds for the $D$-dimensional information operator to obtain relevant log-concave approximations \cite{NW20, BoNi21}. These conditions can be viewed as part of a spectrum with progressively weaker invertibility requirements on the information operator.

For {\it severely ill-posed} problems,  results comparable to the above do not exist in a high-dimensional setting ($D\rightarrow \infty$). This is exemplified by the notorious {\it Calder{\'o}n problem} (see the surveys in \cite{Bor03, U09, Uhl14}), which is well known to exhibit only logarithmic stability in an infinite-dimensional setting---as a consequence, non-parametric estimators converge no better than at logarithmic rates in the inverse noise level \cite{AbNi19} and high-dimension posterior approximations as above cannot be expected.

This brief note is concerned with the Calder{\'o}n problem for piecewise constant conductivities. This is a widely studied variant of the problem (see e.g.~\cite{KLMS08,DuSt16,Har19,AlSa21,AAS20}) that yields a model with parameter space of  {\it fixed dimension $D$}. Postponing precise definitions to \textsection \ref{EITsection}, our setting can be described as follows: We take the viewpoint of {\it Electrical Impedance Tomography (EIT)}, where one seeks to recover an unknown conductivity $\gamma$ inside a body $\Omega\subset \R^d$ from  voltage-to-current measurements $\Lambda_\gamma$ at the boundary $\partial \Omega$. 
We make the assumption that $\gamma$ is piecewise constant with respect to a given partition  $\Omega_1 \cup \dots \cup \Omega_D \subset \Omega $ and obeys bounds $0<\gamma_\min \le \gamma\le \gamma_\max$, such that it can be described by 
a parameter 
\begin{equation}
\theta \in \Theta = [\gamma_\min,\gamma_\max]^D\subset \R^D.
\end{equation} 
In applications this assumption can be  reasonable when part of the interior structure of $\Omega$ or the required resolution is known a priori. 
The operator $\Lambda_\gamma$ is then measured at a finite collection of electrodes $J_1,\dots,J_M\subset \partial \Omega$, assumed to be sufficently small and densely distributed (see also Figure \ref{figure}). This setting allows to consider the Calder{\'o}n problem in a classical i.i.d.\,noise model.

  \begin{figure}\label{figure}
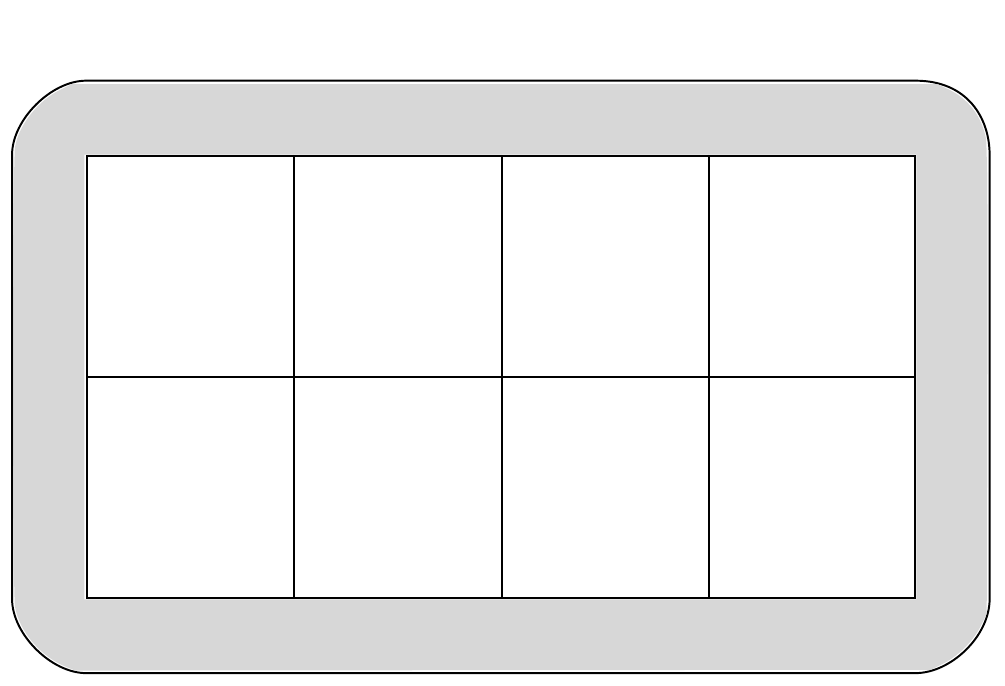   

\caption{The unknown conductivity $\gamma$ is assumed to be constant on $\Omega_1,\Omega_2,\dots$ and equal to $1$ in a region $\Omega_0$ near the boundary. Measurements are taken at electrodes $J_1,J_2,\dots$ on $\partial \Omega$.}
   \end{figure}

The contribution of this note lies in exhibiting, in the just described setting, estimators for $\theta$ with optimal convergence guarantees, as well as a Bernstein--von-Mises theorem,  that is, a Gaussian approximation result for Bayesian posterior distributions.
These results constitute final and definitive guarantees for the Bayesian method for EIT, advocated for in \cite{DuSt16}, in the setting of piecewise constant conductivities. In particular, as a consequence of the Bernstein-von--Mises theorem,  estimation of piecewise constant conductivities $\gamma$ by `posterior means' is {\it optimal} in an asymptotic minimax sense (see Remark \ref{mean})---at the same time, it is unclear whether classical reconstruction procedures (e.g.~based on Nachman's method \cite{KLMS07,KLMS08,KLMS09}) enjoy similar statistical optimality properties for piecewise constant conductivities.

 Our results also highlight the fact that high-dimensionality introduces significant challenges to the statistical understanding of inverse problems which are independent of the type of ill-posedness---indeed, as our results illustrate, many difficulties arising in the context of the first paragraph disappear for fixed dimension $D$, even for the severely ill-posed Calder{\'o}n problem. Further, for a given severely ill-posed problem, it is of significance to identify models in which the severe ill-posedness disappears and we do this here from a statistical point of view for the Calder{\'o}n problem by exhibiting a natural piecewise constant model.

 The statistical conclusions of this article heavily rely on an analytical observation that can be paraphrased as follows: if $\gamma$ is restricted to an appropriate  finite-dimensional space of conductivities, then  finitely many {\it noiseless} measurements of the operator $\Lambda_\gamma$ already suffice to stably determine $\gamma$. Statements of this form were proved e.g.~in \cite{Har19,AlSa21,AAS20}
  and in fact hold true for a larger class of inverse problems, essentially as long as the forward map (here $\gamma \mapsto \Lambda_\gamma$, mapping between appropriate spaces) and its linearisation are known to be injective. One aim of this article is to bridge the gap between the just mentioned advances in deterministic inverse problems and the statistical literature; this comes with the following message: if there are good reasons to model an inverse problem on a  parameter space of fixed finite dimension $D$, then under natural injectivity assumptions, the analysis of the noisy problem falls into a well-understood regime, where  excellent statistical guarantees -- including Le Cam's version of the Bernstein--von-Mises theorem -- become available.
At the same time we want to emphasise that outside of the severely ill-posed setting, where relevant stability constants typically grow exponentially in the model dimension $D$ (see also Remark \ref{exponentialgrowth}),
it is not necessary to fix the dimension $D$ to obtain relevant statistical guarantees---this is illustrated by the results in the first paragraph.

\subsection{Main results} \label{mainresult}

The statistical set-up of our main theorems fits into the general framework of \cite{MNP20,NiPa21,Nic22}; there the inverse problem is encoded in a forward map \begin{equation}\label{forwardmap}
G\colon \Theta\rightarrow L^2_\lambda(\X,V),\quad \theta \mapsto G_\theta
\end{equation}
and one seeks to make inference on $\theta$, given noisy measurements of $G_\theta$.
 Here  $L^2_\lambda(\X,V)$ is the space of $L^2$-integrable functions on a probability space $(\X,\lambda)$, taking values in a finite dimensional inner product space $(V,\langle \cdot ,\cdot\rangle)$. 
In the context of Calder{\'o}n's problem, we will have 
\begin{equation*}
\Theta \equiv[\gamma_\min,\gamma_\max]^D \subset \R^D,\quad \X = \{1,\dots,M\}, 
\quad \text{ and } \quad V=\R^M
\end{equation*}
for fixed $D\in\N$, $0<\gamma_\min <\gamma_\max$ and $M=M(D,\gamma_\min,\gamma_\max)\in \N$;  the precise definitions will be layed out in \textsection \ref{EITsection}, in particular, see Definition \ref{modeldefinition}. Note that while the Calder{\'o}n problem is usually posed in terms of an {\it operator valued} forward map $\gamma\mapsto \Lambda_\gamma$, the restriction to a $D$-dimensional parameter space allows for a  vector valued formulation as in \eqref{forwardmap}. In the context of electrical impedance tomography, one can think of $\X$ as a index set for a collection of electrodes, and of $G_\theta(x)$ as list of current measurements, when a standard voltage is applied at the electrode with index $x\in \mathcal{X}$. 

We consider a random design regression model with Gaussian noise: For $\theta \in \Theta$ define $P_\theta$ as the law of $(Y,X)\in \R^M\times \mathcal{X}$, where $Y$ satisfies the regression equation
\begin{equation*}
Y=G_\theta(X) +\epsilon,
\end{equation*}
with $X \sim \lambda$\,(\,$ = $\,uniform measure) and $\epsilon \sim \mathcal{N}_M(0,I)$ drawn independently. For EIT, this describes a statistical experiment, where an electrode is chosen at random (draw of $X$) and currents are measured at all $M$ electrode locations (recorded in $Y$); again we refer to \textsection \ref{EITsection} for details. Independent repetition of this procedure yields a sequence of statistical experiments
\begin{equation*}
\mathcal{P}_N=\{P^N_\theta: \theta \in \Theta\},\quad N=1,2,\dots,
\end{equation*}
where $P_\theta^N=P_\theta \otimes \dots \otimes P_\theta$ is the $N$-fold product measure on $(\R^M\times \mathcal{X})^N$. The corresponding expectation operator is denoted $E_\theta^N$ and we use the notation $Z_N=\left((Y_i,X_i):i=1,\dots, N\right)$ for a random vector of law $P^N_\theta$ for some $\theta\in \Theta$. Finally, for $N=1$ we write $\mathcal{P}=\mathcal{P}_1$ and $Z\equiv (Y,X) = Z_1$.

The experiment $\mathcal{P}$ (arising from any sufficiently smooth $G$, in particular in the case of the Calder{\'o}n problem) satisfies a natural regularity condition, called {\it differentiability in quadratic mean}, which allows the definition of {\it score function} and {\it information operator}, denoted by
\begin{equation*}
\A_\theta(Y,X)\in \left(\R^D\right)^*\quad \text{ and } \quad \N_\theta\in \R^{D\times D},\quad \theta\in \Theta\backslash \partial \Theta,
\end{equation*}
respectively. For precise definitions in the present context we refer to Definition \ref{definfo} and for a general overview of the importance of $\N_\theta$ in estimation problems we refer to the monograph \cite{vdV98}.

\begin{theorem}\label{informationinvertible} Assume that $G$ (and hence $\mathcal{P}$) arise from the Calder{\'o}n problem for piecewise constant conductivities as described in \textsection \ref{EITsection}. Then the information matrix $\N_\theta$ is invertible for all $\theta\in \Theta\backslash \partial \Theta$.
\end{theorem}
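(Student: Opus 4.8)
The plan is to reduce the invertibility of the $D\times D$ matrix $\N_\theta$ to the injectivity of the linearised forward map $DG_\theta\colon \R^D\to L^2_\lambda(\X,V)$. Recall that under differentiability in quadratic mean the information operator admits the representation $\N_\theta = E_\theta[\A_\theta^\top \A_\theta]$, and for a regression model of the form $Y = G_\theta(X)+\epsilon$ with $\epsilon\sim\mathcal N_M(0,I)$ one has the explicit formula
\begin{equation*}
h^\top \N_\theta h = \int_\X \langle DG_\theta[h](x),\, DG_\theta[h](x)\rangle \,d\lambda(x) = \norm{DG_\theta[h]}_{L^2_\lambda(\X,V)}^2,\qquad h\in\R^D.
\end{equation*}
Thus $\N_\theta$ is a Gram matrix, automatically positive semidefinite, and it is positive definite (hence invertible) if and only if $DG_\theta$ is injective, i.e.\ $DG_\theta[h]=0$ in $L^2_\lambda$ forces $h=0$. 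So the first step is to establish this formula from Definition \ref{definfo}; this should be essentially a restatement of standard facts about differentiability in quadratic mean for Gaussian regression (cf.\ \cite{vdV98}), combined with the chain rule $\A_\theta[h](Y,X) = \langle \epsilon, DG_\theta[h](X)\rangle$.

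The second and main step is to prove that $DG_\theta$ is injective for every interior point $\theta\in\Theta\setminus\partial\Theta$. Since $G_\theta(x)$ is (up to normalisation) a finite collection of entries of the Neumann-to-Dirichlet (or Dirichlet-to-Neumann) operator $\Lambda_{\gamma_\theta}$ evaluated against the electrode patterns $J_1,\dots,J_M$, the derivative $DG_\theta[h]$ is the corresponding finite collection of entries of the linearised operator $D\Lambda_{\gamma_\theta}[\eta_h]$, where $\eta_h=\sum_{j=1}^D h_j\mathbf 1_{\Omega_j}$ is the piecewise constant perturbation encoded by $h$. Here I expect the argument to proceed in two stages: (i) invoke the known injectivity of the linearised Calder\'on problem on the finite-dimensional space of piecewise constant conductivities — this is precisely the kind of statement established in \cite{Har19,AlSa21,AAS20} and alluded to in the introduction, and it uses the interior condition $\theta\notin\partial\Theta$ only insofar as one needs $\gamma_\theta$ bounded away from $0$ and $\infty$ (which holds throughout $\Theta$) — so that $D\Lambda_{\gamma_\theta}[\eta_h] = 0$ as an operator forces $\eta_h=0$, hence $h=0$; and (ii) upgrade from "$D\Lambda_{\gamma_\theta}[\eta_h]$ vanishes as an operator" to "the finitely many measured entries $DG_\theta[h]$ vanish", which is where the hypothesis that the electrodes $J_1,\dots,J_M$ are sufficiently small and densely distributed, with $M=M(D,\gamma_\min,\gamma_\max)$ chosen large enough, enters decisively.

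Step (ii) is the part I expect to be the main obstacle, because it requires turning a qualitative injectivity statement into a quantitative finite-sampling statement. The natural route is a compactness/continuity argument: the map $h\mapsto D\Lambda_{\gamma_\theta}[\eta_h]$ is linear into a space of (smoothing) operators and, by stage (i), injective; on the unit sphere of $\R^D$ the operator norm $\norm{D\Lambda_{\gamma_\theta}[\eta_h]}$ is therefore bounded below by some $c(\theta)>0$, and — crucially — one wants this uniform over $\theta$ in compact subsets, which follows from continuity of $\theta\mapsto D\Lambda_{\gamma_\theta}$ together with compactness. Then one must show that, for $M$ large and electrodes dense, the finite-dimensional "sampled" seminorm $h\mapsto \norm{DG_\theta[h]}_{L^2_\lambda}$ approximates the true operator norm closely enough that it too is bounded below on the unit sphere; this is a standard but delicate discretisation estimate relying on elliptic regularity to control the modulus of continuity of the relevant boundary kernels. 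If \textsection\ref{EITsection} has set up the model so that these facts are packaged as hypotheses (e.g.\ that $M$ is chosen exactly so that the sampling is fine enough to retain injectivity), then Step (ii) collapses to invoking that setup, and the proof reduces to Step (i) plus the Gram-matrix identity; I would structure the write-up to make that dependence explicit.
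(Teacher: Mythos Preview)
Your proposal is correct and follows essentially the same route as the paper: reduce invertibility of $\N_\theta$ to injectivity of the linearised forward map via the Gram identity $\N_\theta = \I_\theta^*\I_\theta$ (Remark~\ref{nfromi}), then obtain injectivity of $\I_\theta = dG_\theta$ from the finite-measurement stability estimate \eqref{finitemeasurements2} in Theorem~\ref{smalldelta}\ref{smalldelta2}. Your anticipated Step~(ii)---passing from injectivity of $d\Lambda_\gamma$ to that of its sampled version $Q_\J d\Lambda_\gamma$ via a uniform lower bound and a discretisation error controlled by $\Delta(\J)$---is exactly how Theorem~\ref{smalldelta} is proved (combining Proposition~\ref{stabilityforfree} with Lemma~\ref{qerror}), so the write-up indeed collapses to invoking that packaged result.
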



The proof of Theorem \ref{informationinvertible} is a consequence of the general theory in \cite{Har19,AlSa21,AAS20} and can be adapted to parametric models of other inverse problems, as long as injectivity and compactness properties similar to \ref{e1},\ref{e2},\ref{e3} from \textsection \ref{EITsection} are available.

Amongst the consequences of Theorem \ref{informationinvertible} are optimal consistency and uncertainty quantification results in a Bayesian setting. The Bayesian approach to inverse problems, see  \cite{S10,DS16,DuSt16} as well as the recent lecture notes \cite{Nic22}, uses the data $Z_N$ to update a chosen prior measure $\Pi$ on $\Theta$ to a posterior measure $\Pi(\cdot\vert Z_N)$. The posterior can in principle be computed from evaluating the forward map $G$ alone and is then used to make inference on $\theta$ by deriving point estimators and credible regions.

As prior we may choose here any Borel probability measure $\Pi$ on $\Theta$ with continuous and positive Lebesgue density $\pi$, that is,
\begin{equation}\label{prior}
d\Pi(\theta)=\pi(\theta) d\theta,\quad \text{ for some } \pi\in C(\Theta,(0,\infty)).
\end{equation}
Given data $Z_N$ in $(\R^M\times \mathcal{X})^N $, define the log-likelihood function
\begin{equation*}
\ell_N(\theta\vert Z_N) = - \frac 12 \sum_{i=1}^N \vert G_\theta(X_i) - Y_i\vert_{\R^M}^2,\quad \theta\in \Theta,
\end{equation*}
and the resulting {\it posterior measure} by
\begin{equation*}
\Pi(A\vert Z_N) = \frac{\int_A e^{\ell_N(\theta\vert Z_N)}d\Pi(\theta)}{\int_\Theta e^{\ell_N(\theta\vert Z_N)}d\Pi(\theta)},\quad A\subset \Theta \text{ Borel measurable}.
\end{equation*}

Our next result is a Bernstein--von-Mises theorem for the Calder{\'o}n problem:

\begin{theorem}\label{bvmthm} Assume that $G$ (and hence $\mathcal{P}_N$) arise from the Calder{\'o}n problem for piecewise constant conductivities as described in \textsection \ref{EITsection}. Let $\Pi$ be a prior as in \eqref{prior} and let $\theta_0 \in \Theta\backslash \partial \Theta$. Then, 
 if $\vartheta\sim \Pi(\cdot \vert Z_N)$
is a posterior draw, as $N\rightarrow \infty$,
\begin{equation*}
\Big \Vert \mathcal{L}\left(\sqrt N(\vartheta - \Psi_{\theta_0,N})\Big\vert Z_N\right) - \mathcal{N}_D(0,\N^{-1}_{\theta_0}) \Big \Vert_{\mathrm{TV}} \rightarrow 0 \text{ in } P_{\theta_0}^N.
\end{equation*}
Here $\mathcal{L}(\cdot)$ denotes the law of a random variable in $\R^D$, $\Vert \cdot \Vert_{\mathrm{TV}}$ is the total variation norm of Borel measures on $\R^D$, and the re-centring $\Psi_{\theta_0,N}$ is given by
\begin{equation}
 \Psi_{\theta_0,N}=\theta_0 + \frac 1N \sum_{i=1}^N \N_{\theta_0}^{-1} \A_{\theta_0}^T(Y_i,X_i).
\end{equation}
\end{theorem}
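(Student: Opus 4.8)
The plan is to deduce Theorem \ref{bvmthm} from the classical theory of locally asymptotically normal (LAN) experiments, in the spirit of Le Cam and as exposed in \cite{vdV98} and applied to inverse problems in \cite{MNP20,NiPa21,Nic22}. The key structural facts we may invoke are: (i) the experiment $\mathcal P$ is differentiable in quadratic mean (this follows from smoothness of $G$, which in the Calder\'on setting is established in \textsection\ref{EITsection}), with score $\A_\theta$ and information matrix $\N_\theta$; and (ii) Theorem \ref{informationinvertible}, which gives invertibility of $\N_{\theta_0}$ at the interior point $\theta_0$. Since the parameter space $\Theta$ is a fixed compact subset of $\R^D$ and $\theta_0$ lies in its interior, the problem is a genuinely finite-dimensional, regular parametric estimation problem, and the general Bernstein--von-Mises machinery applies once a handful of hypotheses are checked.

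First I would record the local asymptotic normality of the sequence $\mathcal P_N$ at $\theta_0$: for $h\in\R^D$ the log-likelihood ratios $\ell_N(\theta_0+h/\sqrt N\mid Z_N)-\ell_N(\theta_0\mid Z_N)$ expand as $h^T\Delta_{N,\theta_0} - \tfrac12 h^T\N_{\theta_0}h + o_{P_{\theta_0}^N}(1)$, with $\Delta_{N,\theta_0}=N^{-1/2}\sum_{i=1}^N\A_{\theta_0}^T(Y_i,X_i)\rightsquigarrow \mathcal N_D(0,\N_{\theta_0})$. For the explicit Gaussian regression model here this is essentially a second-order Taylor expansion of the quadratic log-likelihood $-\tfrac12\sum_i|G_\theta(X_i)-Y_i|^2$, using the $C^2$-dependence of $G$ on $\theta$ and a uniform law of large numbers to control the remainder; alternatively it is immediate from differentiability in quadratic mean plus the standard argument in \cite[Ch.~7]{vdV98}. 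Second, I would check that the posterior concentrates: the map $\theta\mapsto G_\theta$ is continuous and, by injectivity of the forward map (property \ref{e1} in \textsection\ref{EITsection}), separates points, so the Kullback--Leibler / testing conditions needed for posterior consistency hold on the compact set $\Theta$; this yields $\Pi(|\vartheta-\theta_0|>\delta\mid Z_N)\to 0$ in $P_{\theta_0}^N$-probability for every $\delta>0$, and in fact $\sqrt N$-rate localisation $\Pi(|\vartheta-\theta_0|>M_N/\sqrt N\mid Z_N)\to 0$ for $M_N\to\infty$, by the usual bound on the normalising constant via the LAN expansion.

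With LAN, invertible information, posterior consistency, and $\sqrt N$-localisation in hand, the conclusion follows from a general Bernstein--von-Mises theorem for parametric models --- e.g.\ van der Vaart \cite[Thm.~10.1]{vdV98}, or the formulation tailored to PDE inverse problems in \cite{MNP20,Nic22}. Concretely: after rescaling $h=\sqrt N(\vartheta-\theta_0)$ the localised posterior density is proportional to $e^{\ell_N(\theta_0+h/\sqrt N)-\ell_N(\theta_0)}\pi(\theta_0+h/\sqrt N)$; the prior factor tends to $\pi(\theta_0)>0$ uniformly on compacts (continuity and positivity of $\pi$), and the likelihood factor converges, after completing the square, to $e^{-\tfrac12(h-\N_{\theta_0}^{-1}\Delta_{N,\theta_0})^T\N_{\theta_0}(h-\N_{\theta_0}^{-1}\Delta_{N,\theta_0})}$, which is exactly the (unnormalised) density of $\mathcal N_D(\N_{\theta_0}^{-1}\Delta_{N,\theta_0},\N_{\theta_0}^{-1})$. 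Noting that $\theta_0+N^{-1/2}\N_{\theta_0}^{-1}\Delta_{N,\theta_0}=\Psi_{\theta_0,N}$, a standard argument (dominated convergence together with the localisation to turn pointwise convergence of densities into $L^1$, hence total-variation, convergence --- Scheff\'e's lemma) upgrades this to the claimed TV bound between $\mathcal L(\sqrt N(\vartheta-\Psi_{\theta_0,N})\mid Z_N)$ and $\mathcal N_D(0,\N_{\theta_0}^{-1})$.

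The genuinely new input is Theorem \ref{informationinvertible}: without invertibility of $\N_{\theta_0}$ none of this goes through, and for the severely ill-posed Calder\'on problem this is precisely the point that fails in the infinite-dimensional setting. Everything else is, in principle, a verification of textbook hypotheses, but the main technical obstacle I would anticipate in writing the details is establishing the LAN expansion with uniform-in-$h$-on-compacts control of the remainder together with the $\sqrt N$-localisation of the posterior --- i.e.\ showing the posterior puts negligible mass outside $\sqrt N$-neighbourhoods of $\theta_0$. This requires a quantitative lower bound on $\int_\Theta e^{\ell_N(\theta)-\ell_N(\theta_0)}d\Pi$ and exponentially small upper bounds on the numerator away from $\theta_0$, which in turn rely on the continuity and injectivity of $G$ and the compactness of $\Theta$; these are available from \textsection\ref{EITsection}, but assembling them into the precise form required by the BvM theorem is where the care is needed. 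Boundary effects are avoided since $\theta_0\in\Theta\setminus\partial\Theta$, so for $N$ large the relevant neighbourhoods lie in the interior and $\Theta$ may be treated as an open set locally.
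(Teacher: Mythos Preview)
Your approach is essentially the same as the paper's: both reduce Theorem \ref{bvmthm} to the parametric Bernstein--von-Mises theorem \cite[Thm.~10.1]{vdV98}, checking DQM (equivalently LAN), invertibility of $\N_{\theta_0}$ via Theorem \ref{informationinvertible}, and an identifiability/separation condition; the paper dispatches the latter via Lemma 10.6 of \cite{vdV98} together with compactness of $\Theta$, rather than via an explicit posterior-localisation argument.

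One point to tighten: you attribute the fact that $\theta\mapsto G_\theta$ separates points to property \ref{e1}, but \ref{e1} only gives injectivity of $\gamma\mapsto\Lambda_\gamma$ on $E_D\cap L_+^\infty$. What is needed here is injectivity of the \emph{discretised} forward map $\theta\mapsto G_\theta=Q_{\mathbf J}\tilde\Lambda_{\gamma_\theta}$, and this is exactly where the condition $\Delta(\mathbf J)\le\delta$ and Theorem \ref{smalldelta}\ref{smalldelta2} (the Lipschitz stability estimate \eqref{finitemeasurements1}) enter. The paper's proof invokes precisely this result for identifiability, so your argument goes through once you replace the appeal to \ref{e1} by an appeal to Theorem \ref{smalldelta}.
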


\begin{remark}[Posterior means]\label{mean} If the prior $\Pi$ is assumed to have finite moments to all orders (e.g.~if $\Pi$ is a uniform prior), then -- arguing as in Step VII of the proof of Theorem 8 in \cite{Nic20} -- the theorem also holds true when the  re-centring $\Psi_{\theta_0,N}$ is replaced by the more tractable posterior mean
\begin{equation*}
\bar \theta_N \equiv E^\Pi[\vartheta\vert Z_N].
\end{equation*}
By the same proof,  for $Z_N\sim P_{\theta_0}^N$ ($\theta_0\in \Theta\backslash \partial \Theta$) and as $N\rightarrow \infty$, 
 \begin{equation}\label{nonuniform}
	\sqrt N(\bar \theta_N - \theta_0) \rightarrow \mathcal{N}_D(0,\mathbb N_{\theta_0}^{-1})\quad \text{ in distribution},
 \end{equation}
 see also Theorem 10.8 in \cite{vdV98}.
 In particular, posterior means achieve the asymptotic minimax optimal variance for estimation of $\theta$ under square loss. That is, the posterior means $T_N=\bar \theta_N$ obtain the lower bound in the asymptotic risk estimate 
 \begin{equation} \label{minimax}
\lim_{\delta\rightarrow 0}\, \liminf_{N\rightarrow \infty}\, \sup_{\Vert \theta - \theta_0\Vert<\delta} \mathrm{Cov}^N_{\theta} \big [\sqrt N(T_N - \theta) \big] \ge \mathbb N_{\theta_0}^{-1},
 \end{equation}
 understood in the sense of the Loewner order, which is valid for all estimator sequences $(T_N\colon (\R^M\times \mathcal X)^N \rightarrow \Theta \colon N=1,2,\dots)$ (cf.~Theorem 8.11 in \cite{vdV98}). This follows from a version of \eqref{nonuniform} that is locally uniform in $\theta_0\in \Theta\backslash \partial \Theta$---we omit a detailed proof.
\end{remark}

Let us discuss some further consequence of the preceding theorem. First, for any $\theta_0\in \Theta\backslash \partial \Theta$ there exists a constant $C>0$ such that, as $N\rightarrow \infty$,
\begin{equation*}
\Pi\left(\theta: \Vert \theta - \theta_0 \Vert \le C N^{-1/2} \vert Z_N \right) \rightarrow 1 \text{ in } P_{\theta_0}^N.
\end{equation*}
This means that the posterior is {\it consistent} at $\theta_0$, in the sense that it is likely to concentrate its mass in small balls around the truth, at the {\it optimal} rate of $N^{-1/2}$. Recall from \cite{AbNi19} that in infinite dimensions (say, for $C^\beta$-regular conductivities for some $\beta>0$), the posterior only concentrates about $\theta_0$ with rate $(\log N)^{-\delta}$ for some $\delta>0$.

Moreover, using the strengthened version of the theorem from Remark \ref{mean}, one obtains an uncertainty quantification  as follows: For a confidence level $0<\alpha<1$, define {\it credible regions} $C_N\subset \Theta$  and quantiles $R_N>0$ by
\begin{equation*}
C_N=\{\theta\in \Theta: \vert \theta - \bar \theta_N\vert<R_N \},\quad \Pi(C_N\vert Z_N)=1-\alpha.
\end{equation*}
Note that finding $C_N$ and $R_N$ does {\it not} require knowledge of the information matrix $\N_{\theta_0}$, but can be computed approximately via Markov chain Monte Carlo methods.
Arguing as in \textsection 2.5 of \cite{MNP21} (see also \textsection 4.1.3 in \cite{Nic22}), the Bernstein-von-Mises theorem implies that, as $N\rightarrow \infty$,
\begin{equation*}
P_{\theta_0}^N(\theta_0\in C_N) \rightarrow 1-\alpha,
\end{equation*}
showing that $C_N$ has valid frequentist coverage of the true parameter $\theta_0$ at level $\alpha$.

\subsection{A statistical Calder{\'o}n problem} \label{EITsection} We now give a brief account of the Calder{\'o}n problem and explain how it can be recast as an inverse problem with a forward map $G$ as in \eqref{forwardmap}. Throughout, we employ the following notational conventions: 

\begin{itemize} 
\item If not stated otherwise, function spaces contain $\R$-valued functions and all Banach spaces are real;
\item for two Banach spaces $E$ and $F$, the space of bounded linear operators is denoted $\B(E,F)$ and the operator norm is written $\Vert \cdot \Vert_{\B(E,F)}$ or $\Vert \cdot \Vert_{E\rightarrow F}$. If $E=F$, then we write $\B(E)$ instead of $\B(E,E)$;
\item the indicator function of a set $S$ is denoted with $1_S$.
\end{itemize}

\medskip

 Let $\Omega \subset \R^d$ ($d\ge 2$) be a bounded domain with smooth boundary $\partial \Omega$. Let $L_+^\infty(\Omega)$ be the space of $\gamma\in L^\infty(\Omega)$ which satisfy a lower bound $\gamma\ge c$ for some $c\in (0,\infty)$, almost everywhere in $\Omega$. Given $\gamma\in L^\infty_+(\Omega)$, the boundary value problem
\begin{equation*}
\div (\gamma \nabla u)=0 \text{ in }\Omega,\quad u = g \text{ on } \partial \Omega
\end{equation*}
has a unique solution $u=u^g$ for any given boundary datum $g\colon \partial \Omega\rightarrow \R$ of sufficient regularity. The {\it Dirichlet-to-Neumann operator}  of $\gamma$ is then formally defined by \begin{equation*}
\Lambda_{\gamma}g = \gamma \partial_\nu u^g\vert_{\partial \Omega},
\end{equation*} where $\partial_\nu$ is the (outward pointing) normal derivative.
 Denoting with $H^s=H^s(\partial \Omega)$ the Sobolev space of order $s\in \R$, the operator $\Lambda_\gamma$ extends to a bounded linear map $H^{1/2}\rightarrow H^{-1/2}$.  This gives rise to a {\it nonlinear} map 
\begin{equation*}
L_+^\infty(\Omega)\rightarrow \B(H^{1/2},H^{-1/2}),\quad \gamma \mapsto \Lambda_\gamma,
\end{equation*}
and Calder{\'o}n's  problem asks to establish injectivity of this map on large classes of conductivities, or indeed to reconstruct $\gamma$ from $\Lambda_\gamma$. We refer to \cite{Cal80,KoVo85,SyUh87,Nac88,Nac96,AsPa06} for some of the most influential articles on this topic.

In electrical impedance tomography, the function $\gamma$ models an unknown electrical conductivity inside $\Omega$ and $\Lambda_\gamma g$ is the electrical current caused by applying voltage $g$ on $\partial \Omega$; the conductivity $\gamma$ is then to be determined from voltage-to-current measurements. We want to pose a statistical inverse problem of the following form: After fixing electrode positions $J_1,\dots, J_M\subset \partial \Omega$, one electrode $J_{X}$ is chosen at random and a constant voltage $g=1_{J_X}$ 
is applied. The resulting current is then measured at all electrodes, corrupted by statistical noise, yielding a data vector  $Y$ in $\R^M$. The random vector $(Y,X)\in \R^M\times \{1,\dots, M\}$ can then be interpreted as data vector in a statistical regression model, as described in \textsection \ref{mainresult}.

To pass to a setting as in \textsection \ref{mainresult}, we make several {\it a priori} assumptions, which will inform the precise definition of the forward map $G$ in \eqref{forwardmap}. In essence, we require that $\gamma$ is piecewise constant and  satisfies a bound $\gamma_\min\le \gamma \le \gamma_\max$. In this case, the electrodes $J_1,\dots, J_M$ can be chosen such that $\gamma$ is determined from finitely many {\it noiseless} measurements at these positions (cf.\,Remark \ref{remdelta}); this is the setting in which our statistical analysis takes place (see also Figure \ref{figure} above).

We now describe the assumptions on $\gamma$ and the forward map $G$ in detail.
First, we assume that $\gamma$ belongs to a known, finite dimensional class $E_D$ of conductivities.   Let $\Omega_0,\dots,\Omega_D\subset \Omega$ be a collection of pairwise disjoint domains, satisfying
\begin{equation*}
\bar\Omega = \bar \Omega_0\cup\dots\cup \bar \Omega_D\quad \text{ and } \quad \bar \Omega_i \subset \Omega\quad \text{ for } i=1,\dots, D,
\end{equation*}
and having piecewise smooth boundaries.
Denoting the associated characteristic functions by $1_{\Omega_0},1_{\Omega_1},\dots$, we define
\begin{equation}\label{defe}
E_D :=\left \{\gamma\in L^\infty(\Omega): \gamma=\gamma_\theta \equiv 1_{\Omega_0} + \sum_{i=1}^D \theta_i 1_{\Omega_i}\text{ for } (\theta_1,\dots,\theta_D)\in \R^D\right \},
\end{equation}
with {\it tangent space} $E'_D=\{\kappa\in L^\infty(\Omega): \kappa + 1_{\Omega_0} \in E_D\}$.
The space $E_D$ satisfies the following properties (see Theorem \ref{checkproperties}): 
\begin{enumerate}[label=(P\arabic*)]
	\item \label{e1} $E_D\cap L_+^\infty(\Omega)\ni\gamma \rightarrow \Lambda_\gamma$ is injective;
	\smallskip
	\item \label{e2} For all $\gamma \in E_D\cap L_+^\infty(\Omega)$, the differential $E_D'\ni \kappa\mapsto d\Lambda_\gamma(\kappa)$ is injective.
\end{enumerate}	
 To formulate a third property, recall that an operator $T$ on $\partial \Omega$ is called {\it smoothing} if it extends to a bounded linear operator $T\colon H^s\rightarrow H^t$ for all choices of $s,t\in \R$. As functions in $E_D$ are constant near $\partial \Omega$, also the following holds true:

	
\begin{enumerate}[label=(P\arabic*)]
\setcounter{enumi}{2}
	\item \label{e3} For all $\gamma \in E_D\cap L_+^\infty(\Omega)$ and $\kappa \in E_D'$, the operator $d\Lambda_\gamma(\kappa)$ is smoothing and the map $(\gamma,\kappa)\mapsto \Vert d\Lambda_\gamma(\kappa) \Vert_{H^s\rightarrow H^t}$ is locally bounded for any $s,t\in \R$.
\end{enumerate}

\begin{remark}
The preceding three properties 
also hold for other finite dimensional classes $E_D$ (e.g.~containing piecewise analytic functions \cite{Har19}) and we could have chosen any such class; for the sake of concreteness we will only work with piecewise constant conductivities as in \eqref{defe}.
\end{remark}

\begin{remark} Property \ref{e3} guarantees certain compactness properties that allow for the passage to `finite measurements' in the sense of \cite{AlSa21}. One can dispense with \ref{e3}, if one passes to a different measurement model and uses e.g.~the {\it Neumann-to-Dirichlet} operators -- this route is chosen in the just cited article.
\end{remark}

Now suppose that $\J=\{J_1,\dots,J_M\}$ is a collection of pairwise disjoint, measurable subsets  $J_1,\dots, J_M\subset \partial \Omega$ of positive measure. We define the quantity 
\begin{equation}\label{Delta}
\Delta(\J):=\Big\vert \partial \Omega \backslash \bigcup_{k=1}^M J_k \Big\vert^{1/2} + \sup_{k=1,\dots, M} \diam (J_k)>0, 
\end{equation}
where $\vert \cdot \vert$ denotes the surface measure on $\partial \Omega$ and $\diam J = \sup_{x,y\in J}\vert x - y \vert$ is the diameter. Given a priori bounds $\gamma_\min \le \gamma \le \gamma_\max$, it will suffice to test $\Lambda_\gamma$ at the indicator functions $1_{J_1},\dots,1_{J_M}$, provided $\Delta(\J)$ is sufficiently small. Precisely, we make the assumption that
\begin{equation}\label{Deltacondition}
\Delta(\J) \le \delta = \delta(E_D,\gamma_\min,\gamma_\max),
\end{equation}
where $\delta>0$ is specified in Theorem \ref{smalldelta} below. For technical reasons we now pass to the {\it normalised} Dirichlet-to-Neumann operator \begin{equation}\label{normaliseddtn}
\tilde \Lambda_\gamma = \Lambda_\gamma - \Lambda_1,
\end{equation} which extends to an $L^2$-bounded  (in fact,  smoothing) operator and contains the same information as $\Lambda_\gamma$; this ensures that the $L^2$-pairing $\langle \tilde \Lambda_{\gamma} 1_{J_i},1_{J_j}\rangle$ is well defined.

\begin{definition}\label{modeldefinition} Given {\it a priori} choices of $D\in \N$ and $0<\gamma_\min\le \gamma_\max$, as well as a choice of $\J=\{J_1,\dots, J_M\}$ satisfying \eqref{Deltacondition}, we make the following definitions: 

\begin{equation*}
\Theta :=\{\theta \in \R^D\colon   \gamma_\min \le \theta_i \le \gamma_\max \text{ for all } i=1,\dots,D\},
\end{equation*}
\begin{equation}\label{defG}
G^{ij}_\theta := \frac{1}{(\vert J_i\vert \vert J_j\vert)^{1/2}}\langle \tilde \Lambda_{\gamma_\theta} 1_{J_i},1_{J_j}\rangle_{L^2(\partial \Omega)},\quad 1\le i,j,\le M,\theta \in \Theta,
\end{equation}
where $\gamma_\theta$ is the piecewise constant conductivity from \eqref{defe}. Finally, with $(\X,\lambda)=\left(\{1,\dots, M\},\text{Uniform}\right)$, a forward map $G$ as in \textsection\ref{mainresult} is defined by
\begin{equation}\label{defG2}
G\colon \Theta\rightarrow L^2_\lambda(\X,\R^M),\quad  G_\theta(x)=(G^{x j}_\theta:j=1,\dots, M)\in \R^M \quad (x\in \X).
\end{equation}
\end{definition}

\begin{remark}\label{remdelta}
In order for the results of  \textsection \ref{mainresult} to apply,  the electrodes have to be chosen in accordance with requirement \eqref{Deltacondition}, which is of course difficult to verify in practice. The threshold $\delta$ will in particular depend on the precise Lipschitz stability constants for the forward map and its linearisation on $E_D$, as well as a domain dependent constant---see also Remark 4 and Example 4 in \cite{AlSa21}.
\end{remark}

\section{Analytical aspects}\label{analytical aspects}

In this section we collect some results, mostly well known, on the forward map $\gamma\mapsto \Lambda_\gamma$ and its linearisation $d\Lambda_\gamma$. In particular, we show that properties \ref{e1},\ref{e2} and \ref{e3} from \textsection \ref{EITsection} are satisfied on spaces of piecewise constant conductivities. 

Throughout, $\Omega$ is a bounded, smooth domain $\Omega\subset \R^d$. We will write $\int_{\Omega}$ and $\int_{\partial \Omega}$ for integration on $\Omega$ and $\partial \Omega$, understood with respect to the Lebesgue measures of dimension $d$ and $d-1$, respectively; the corresponding $L^2$-pairing (as usual, extended as duality pairing between distributions) is denoted with $\langle\cdot,\cdot\rangle$ in both cases.

\subsection{Linearisation}\label{linearisation} The linearisation of $\gamma\mapsto \Lambda_\gamma$ was already computed in  Calder{\'o}n's original article \cite{Cal80}, we summarise his result (supplemented by a statement on Fr{\'e}chet differentiablity, which is easy to check) in the following proposition.
\begin{proposition}
Given $g\in H^{1/2}(\partial \Omega)$, the map $\gamma\mapsto \langle \Lambda_\gamma g ,g\rangle$ is real analytic on $L^\infty_+(\Omega)$. Its differential at $\gamma\in L^\infty_+(\Omega)$ in direction $\kappa\in L^\infty(\Omega)$ is given by
\begin{equation}\label{defdlambda}
\langle d\Lambda_\gamma(\kappa)g,g\rangle := \frac{d}{dt}\Big \vert_{t=0} \langle \Lambda_{\gamma+t\kappa} g,g\rangle =  \int_\Omega \kappa \vert \nabla u^g \vert^2,
\end{equation}
where $u=u^g$ is the unique solution in $H^1(\Omega)$ to \begin{equation*}
\div (\gamma \nabla u ) = 0  \text{ on }  \Omega \quad \text{ and }\quad u=g \text{ on } \partial \Omega.
\end{equation*}
Equation \eqref{defdlambda} defines a bounded linear operator $d\Lambda_\gamma$ from $L^\infty(\Omega)$ to $\B(H^{1/2},H^{-1/2})$, depending continuously on $\gamma\in L_+^\infty(\Omega)$. Moreover, the map $\gamma\mapsto \Lambda_\gamma$ is Fr{\'e}chet differentiable as map from $L_+^\infty(\Omega)$ into $\B(H^{1/2},H^{-1/2})$, with Fr{\'e}chet derivative $d\Lambda_\gamma$.\qed
\end{proposition}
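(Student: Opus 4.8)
The plan is to work throughout with the variational formulation of the conductivity equation. For $\gamma\in L_+^\infty(\Omega)$ and $g\in H^{1/2}(\partial\Omega)$, fix a bounded extension operator $E\colon H^{1/2}\to H^1(\Omega)$ and write $u^g_\gamma\in H^1(\Omega)$ for the unique weak solution, i.e.\ $u^g_\gamma - Eg\in H^1_0(\Omega)$ and $\int_\Omega \gamma\nabla u^g_\gamma\cdot\nabla v=0$ for all $v\in H^1_0(\Omega)$; existence, uniqueness and the a priori bound $\|u^g_\gamma\|_{H^1}\le C(\|\gamma\|_\infty,c)\|g\|_{H^{1/2}}$ come from Lax--Milgram, coercivity following from Poincaré and the lower bound $\gamma\ge c$. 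I would record two algebraic identities obtained by inserting solutions into each other's weak formulations together with the (weak) definition of the normal trace: the energy identity $\langle\Lambda_\gamma g,g\rangle=\int_\Omega\gamma|\nabla u^g_\gamma|^2$, and, for $\gamma_1,\gamma_2\in L_+^\infty(\Omega)$, the difference identity
\[
\langle(\Lambda_{\gamma_1}-\Lambda_{\gamma_2})g,h\rangle=\int_\Omega(\gamma_1-\gamma_2)\,\nabla u^g_{\gamma_1}\cdot\nabla u^h_{\gamma_2}.
\]
Subtracting the weak formulations for $\gamma_1,\gamma_2$ and testing against $u^g_{\gamma_1}-u^g_{\gamma_2}\in H^1_0(\Omega)$ also yields the local Lipschitz bound $\|u^g_{\gamma_1}-u^g_{\gamma_2}\|_{H^1}\le C\|\gamma_1-\gamma_2\|_\infty\|g\|_{H^{1/2}}$, uniformly for $\gamma_1,\gamma_2$ in a small $L^\infty$-ball around a fixed $\gamma$.

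For real analyticity I would argue via operator inversion. The map $\gamma\mapsto A_\gamma$, $A_\gamma w:=-\div(\gamma\nabla w)$, is \emph{linear} and bounded from $L^\infty(\Omega)$ into $\B(H^1_0(\Omega),H^{-1}(\Omega))$, hence entire; on the open set $L_+^\infty(\Omega)$ the operator $A_\gamma$ is boundedly invertible (Lax--Milgram), and since $T\mapsto T^{-1}$ is real-analytic on the open set of invertible operators (locally a Neumann series), $\gamma\mapsto A_\gamma^{-1}$ is real-analytic into $\B(H^{-1},H^1_0)$. Writing $u^g_\gamma = Eg - A_\gamma^{-1}\!\big(\!-\!\div(\gamma\nabla Eg)\big)$ exhibits $\gamma\mapsto u^g_\gamma$ as a composition and product of real-analytic maps into $H^1(\Omega)$, and composing with the bounded polynomial map $(\gamma,v)\mapsto\int_\Omega\gamma|\nabla v|^2$ shows that $\gamma\mapsto\langle\Lambda_\gamma g,g\rangle=\int_\Omega\gamma|\nabla u^g_\gamma|^2$ is real-analytic on $L_+^\infty(\Omega)$. (Alternatively, one may expand this expression as a convergent Neumann series in $\kappa$ for small $\|\kappa\|_\infty$ and read off analyticity directly.)

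To identify the differential, differentiate $t\mapsto\langle\Lambda_{\gamma+t\kappa}g,g\rangle=\int_\Omega(\gamma+t\kappa)|\nabla u^g_{\gamma+t\kappa}|^2$ at $t=0$: by the analyticity just established $\dot u:=\partial_t|_{t=0}u^g_{\gamma+t\kappa}$ exists in $H^1(\Omega)$, and $\dot u\in H^1_0(\Omega)$ because the boundary datum $g$ is held fixed. The product rule gives $\int_\Omega\kappa|\nabla u^g_\gamma|^2+2\int_\Omega\gamma\nabla u^g_\gamma\cdot\nabla\dot u$, and the second term vanishes because $\dot u\in H^1_0(\Omega)$ and $u^g_\gamma$ solves the weak equation; this is \eqref{defdlambda}. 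Polarisation then shows $d\Lambda_\gamma(\kappa)$ is the symmetric operator with bilinear form $(g,h)\mapsto\int_\Omega\kappa\,\nabla u^g_\gamma\cdot\nabla u^h_\gamma$, so $|\langle d\Lambda_\gamma(\kappa)g,h\rangle|\le\|\kappa\|_\infty\|\nabla u^g_\gamma\|_{L^2}\|\nabla u^h_\gamma\|_{L^2}\le C(\gamma)\|\kappa\|_\infty\|g\|_{H^{1/2}}\|h\|_{H^{1/2}}$; linearity in $\kappa$ being evident, $\kappa\mapsto d\Lambda_\gamma(\kappa)$ is bounded $L^\infty(\Omega)\to\B(H^{1/2},H^{-1/2})$, and continuity in $\gamma$ follows either from the analyticity above or directly from the Lipschitz bound on $\gamma\mapsto u^g_\gamma$.

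Finally, for Fréchet differentiability of $\gamma\mapsto\Lambda_\gamma$ into $\B(H^{1/2},H^{-1/2})$, combine the difference identity with $\gamma_1=\gamma+\kappa$, $\gamma_2=\gamma$ and the formula for $d\Lambda_\gamma$:
\[
\langle(\Lambda_{\gamma+\kappa}-\Lambda_\gamma-d\Lambda_\gamma(\kappa))g,h\rangle=\int_\Omega\kappa\,(\nabla u^g_{\gamma+\kappa}-\nabla u^g_\gamma)\cdot\nabla u^h_\gamma,
\]
which, by Cauchy--Schwarz, the a priori bound for $u^h_\gamma$ and the Lipschitz bound for $u^g_{\gamma+\kappa}-u^g_\gamma$, is $\le C(\gamma)\|\kappa\|_\infty^2\|g\|_{H^{1/2}}\|h\|_{H^{1/2}}$ for $\|\kappa\|_\infty$ small. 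Hence $\|\Lambda_{\gamma+\kappa}-\Lambda_\gamma-d\Lambda_\gamma(\kappa)\|_{\B(H^{1/2},H^{-1/2})}=O(\|\kappa\|_\infty^2)=o(\|\kappa\|_\infty)$, giving the Fréchet derivative $d\Lambda_\gamma$. The only step that calls for genuine care is the clean passage to infinite-dimensional real-analyticity — making precise that inversion of the elliptic operator depends analytically on $\gamma$ between the relevant operator spaces; everything else is bookkeeping with Lax--Milgram estimates and the two algebraic identities above.
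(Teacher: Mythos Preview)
Your proof is correct. The paper does not actually supply a proof of this proposition: it merely attributes the differential formula to Calder{\'o}n's original article and remarks that Fr{\'e}chet differentiability is ``easy to check'', closing the statement with a \qed. What you have written fills in precisely these omitted details via the standard variational route --- the energy identity, the Alessandrini-type difference identity, Lax--Milgram a priori bounds, and analyticity of operator inversion via Neumann series --- and your computation of the remainder term for Fr{\'e}chet differentiability is clean and correct. There is nothing to compare against in the paper itself; your argument is the canonical one the paper is implicitly invoking.
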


The next proposition is an instance of a well-known phenomenon: If $\gamma$ agrees with a smooth conductivity $\gamma'$ in a neighbourhood of $\partial \Omega$, then $\Lambda_
\gamma - \Lambda_{\gamma'}$ is a smoothing operator. For the linearisation $d\Lambda_\gamma$ this has the following consequence:

\begin{proposition}\label{smoothing}
Let $K\subset \Omega$ be compact and $s,t\in \R$. Then for all $\gamma\in L_+^\infty(\Omega)$ with $\gamma = 1$ on $\Omega\backslash K$ and all $\kappa\in L^\infty(\Omega)$ with $\kappa=0$ on $\Omega\backslash K$ the differential $d\Lambda_\gamma(\kappa)$ extends to a bounded linear operator from $H^s$ to $H^t$, with operator norm
\begin{equation}
\Vert d\Lambda_{\gamma}(\kappa)\Vert_{H^s\rightarrow H^t} \le C
\end{equation}
for a constant $C$ depending only on $s,t,K,\Vert \kappa\Vert_{L^\infty(\Omega)}$ and $\inf_\Omega \gamma$.
\end{proposition}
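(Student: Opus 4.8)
The plan is to exploit the explicit formula \eqref{defdlambda}, which writes the quadratic form $\langle d\Lambda_\gamma(\kappa) g, g\rangle$ as $\int_\Omega \kappa\,|\nabla u^g|^2$, and to show that this bilinear form in $(g,h)$ extends continuously to $H^s\times H^t$ for \emph{any} $s,t$ once $\kappa$ is supported in the compact set $K\subset\subset\Omega$. The key heuristic is interior elliptic regularity away from $\operatorname{supp}\kappa$: although $u^g$ is only $H^1$ globally (the coefficient $\gamma$ is merely $L^\infty$), on the region $\Omega\setminus K$ we have $\gamma\equiv 1$, so $u^g$ is \emph{harmonic} there, hence $C^\infty$ in the interior of $\Omega\setminus K$, with all derivatives controlled by low-order norms of $g$.

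First I would polarise: for $g,h\in H^{1/2}$, write $\langle d\Lambda_\gamma(\kappa)g,h\rangle = \int_\Omega \kappa\,\nabla u^g\cdot\nabla u^h$, which follows from \eqref{defdlambda} by the standard polarisation identity (and symmetry of $d\Lambda_\gamma(\kappa)$, itself visible from the formula). Next, fix a cutoff: choose a compact set $K'$ with $K\subset \operatorname{int}(K')\subset K'\subset\subset\Omega$ and a smooth $\chi$ equal to $1$ on $K$ and supported in $\operatorname{int}(K')$. Since $\kappa$ vanishes outside $K$, only the values of $\nabla u^g$ on $K$ matter, so it suffices to bound $\|\nabla u^g\|_{L^2(K)}$ — or, better, $\|u^g\|_{C^1(K)}$ — by $\|g\|_{H^s}$ for arbitrarily negative $s$. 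On $\Omega\setminus K$, $u^g$ solves $\Delta u^g=0$; by interior estimates for harmonic functions, for any annular region $K\subset U\subset\subset \Omega\setminus\partial\Omega$ we have $\|u^g\|_{C^2(K)}\lesssim \|u^g\|_{L^2(U\setminus K)}$, and the latter is $\le \|u^g\|_{L^2(\Omega)}\lesssim \|g\|_{H^{1/2}}$ by well-posedness; this already gives the bound for $s=t=1/2$.

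To push $s$ and $t$ down to arbitrary negative values — which is what ``smoothing'' requires — I would use a duality/boundary-regularity bootstrap. The operator $d\Lambda_\gamma(\kappa)$ maps into functions on $\partial\Omega$; its integral kernel on $\partial\Omega\times\partial\Omega$ is, by the above, given by $(x,y)\mapsto$ an expression built from normal derivatives on $\partial\Omega$ of the two solutions, and solutions of $\Delta u=0$ near $\partial\Omega$ extend analytically across any patch where the Dirichlet data are smooth — but cleaner is this: represent $d\Lambda_\gamma(\kappa)$ through Green's identities so that testing against $g,h$ only ever sees $u^g,u^h$ restricted to $K$, where (as shown) the solution operator $g\mapsto u^g|_{K}\in C^\infty(K)$ is bounded from $H^s$ for every $s$, \emph{because} it factors as $g\mapsto u^g|_{\partial V}\in H^{1/2}(\partial V)$ for an interior surface $\partial V$ separating $K$ from $\partial\Omega$, followed by the smoothing interior harmonic extension, and the first map $g\mapsto u^g|_{\partial V}$ is itself smoothing: it is the composition of the (at worst $H^{1/2}\to H^{-1/2}$, hence $H^s\to H^{s-1}$ after propagating) solution with restriction to a surface in the region where $u^g$ is smooth. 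Iterating the interior estimate on a nested sequence of surfaces closing in on $K$ gains regularity at each step, yielding $\|u^g\|_{C^1(K)}\le C(s,K)\|g\|_{H^s}$ for every $s\in\R$. The constant's dependence is tracked: it enters only through $\inf_\Omega\gamma$ (via the energy estimate $\|u^g\|_{H^1(\Omega)}\lesssim (\inf\gamma)^{-1}\|g\|_{H^{1/2}}$ and the trace theorem), through $K$ and the chosen interior surfaces (which depend only on $K$ and $\Omega$, and $\Omega$ is fixed), through $s,t$, and through $\|\kappa\|_{L^\infty}$ via the Hölder bound $|\int_\Omega\kappa\,\nabla u^g\cdot\nabla u^h|\le \|\kappa\|_{L^\infty}\|\nabla u^g\|_{L^2(K)}\|\nabla u^h\|_{L^2(K)}$.

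The main obstacle is the low regularity of $\gamma$ inside $K$: one cannot bootstrap regularity of $u^g$ \emph{globally}, so the argument must be carefully localised to $\Omega\setminus K$ where $\gamma$ is trivial, and the estimate transferred to $K$ purely through the values of $u^g$ on a separating interior hypersurface. Making this transfer rigorous — i.e.\ justifying that the map $g\mapsto u^g|_{\partial V}$ gains arbitrarily many derivatives, uniformly in $\gamma$ subject only to $\gamma=1$ off $K$ and $\inf\gamma$ bounded below — is the technical heart, and is handled by iterating interior Schauder/Caccioppoli estimates for harmonic functions on a Whitney-type chain of shells between $\partial\Omega$ and $K$. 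Everything else (polarisation, the Hölder bound, the trace and well-posedness estimates) is routine.
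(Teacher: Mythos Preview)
Your overall strategy---polarise, apply H\"older to localise the bilinear form to $K$, and then bound $\|u^g\|_{H^1}$ on a neighbourhood of $K$ by $\|g\|_{H^s}$ for arbitrary $s$---is exactly the paper's. However, your execution of the last step has a genuine gap. The function $u^g$ is \emph{not} harmonic on $K$ (there $\gamma$ is merely $L^\infty$), so the claim $\|u^g\|_{C^2(K)}\lesssim\|u^g\|_{L^2(U\setminus K)}$ via interior harmonic estimates is false: interior estimates only control $u^g$ on compact subsets of $\Omega\setminus K$, not on $K$ itself. Likewise, your factorisation through $\partial V$ breaks down at the second step: inside $V\supset K$ the equation has rough coefficients, so the extension from $\partial V$ inward is not a ``smoothing harmonic extension'' but merely $H^1$. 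And even the first step---that $g\mapsto u^g|_{\partial V}$ gains arbitrarily many derivatives \emph{on the source side}---is not established by your Whitney iteration, which gains regularity on the target (raising $m$ in $H^m(\partial V)$) but does nothing to lower $s$ below $1/2$; indeed, $u^g$ is not even defined for $g\in H^s$ with $s$ very negative without additional work.

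The paper's fix is a single clean decomposition that replaces all of this: write $u^g=P_\gamma g=P_1 g+(P_\gamma-P_1)g$. The first summand $P_1 g$ is harmonic on \emph{all} of $\Omega$, so by standard elliptic theory $P_1\colon H^s(\partial\Omega)\to H^{s+1/2}(\Omega)$ is bounded for every $s\in\R$, and interior smoothing (restriction of harmonic functions to $U$ with $\bar U\subset\Omega$) then gives $\|P_1 g\|_{H^1(U)}\le C\|g\|_{H^s}$. The second summand $w=(P_\gamma-P_1)g$ lies in $H^1_0(\Omega)$ and solves $\langle\gamma\nabla w,\nabla\varphi\rangle=-\langle(\gamma-1)\nabla P_1 g,\nabla\varphi\rangle$; since $\gamma-1$ is supported in $K$, the right-hand side is controlled by $\|P_1 g\|_{H^1(U)}$ and hence by $\|g\|_{H^s}$, and Lax--Milgram gives $\|w\|_{H^1(\Omega)}\le C\|g\|_{H^s}$ with $C=C(s,K,\inf_\Omega\gamma)$. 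Summing yields $\|u^g\|_{H^1(U)}\le C\|g\|_{H^s}$ for every $s\le 1/2$, which (together with the analogous bound for $u^h$) is all that is needed.
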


For this, we require two auxiliary lemmas.

\begin{lemma}\label{poissondiff} For all $s\in (-\infty,1/2]$ and all $\gamma\in L_+^\infty(\Omega)$ with $\gamma=1$ on $\Omega\backslash K$ there exists a continuous linear Poisson operator 
\begin{equation*}
P_\gamma\colon H^s(\partial\Omega)\rightarrow H^{s+1/2}(\Omega)\cap H^1_{\loc}(\Omega),
\end{equation*} 
as follows: If $g\in H^s(\partial \Omega)$, then $u=P_\gamma g$ solves the boundary value problem  
\begin{equation}\label{bvp}
\div(\gamma \nabla u) =0 \text{ in } \Omega,\quad  u = g\text{ on } \partial \Omega
\end{equation} in a sense made precise in the proof.
Moreover, the difference operator $P_\gamma-P_1$ maps $H^s(\partial \Omega)$ into $H^1_0(\Omega)$, with norm
\begin{equation*}
\Vert P_\gamma - P_1 \Vert_{H^s(\partial \Omega)\rightarrow H^1_0(\Omega)} \le C(s,K,\inf_\Omega \gamma).
\end{equation*}
\end{lemma}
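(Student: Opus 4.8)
The plan is to construct $P_\gamma$ as a compactly supported perturbation of the classical harmonic extension. \emph{Step 1 (the case $\gamma=1$ and the meaning of \eqref{bvp}).} Let $P_1\colon H^s(\partial\Omega)\to H^{s+1/2}(\Omega)$ be the harmonic extension operator; by standard elliptic boundary regularity this is bounded for every $s\in\R$, and $P_1 g$ is harmonic, hence $C^\infty$, in the interior of $\Omega$, so $P_1 g\in H^{s+1/2}(\Omega)\cap H^1_{\loc}(\Omega)$. I would take as the precise sense in the statement that a function $u\in H^1_{\loc}(\Omega)$ \emph{attains the boundary datum $g$} if $u-P_1 g\in H^1_0(\Omega)$; this agrees with the usual trace for $g\in H^{1/2}(\partial\Omega)$, and together with $\int_\Omega\gamma\nabla u\cdot\nabla\varphi=0$ for all $\varphi\in C^\infty_c(\Omega)$ (which makes sense since $\nabla u\in L^2_{\loc}$), it is the sense in which \eqref{bvp} is solved.

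\emph{Step 2 (localised reduction to an $H^1_0$ problem).} Fix once and for all a cutoff $\chi\in C^\infty_c(\Omega)$ with $\chi\equiv 1$ on a neighbourhood of $K$; note $\chi$ depends only on $K$ and $\Omega$. For $g\in H^s(\partial\Omega)$ put $v=P_1 g$ and look for $u$ in the form $u=(1-\chi)v+\tilde w$ with $\tilde w\in H^1_0(\Omega)$. The point of this particular splitting is that on $\{\chi\neq 1\}$ and on $\supp\nabla\chi$ — both of which avoid $K$ — one has $\gamma\equiv 1$, so $\gamma\nabla\big((1-\chi)v\big)=\nabla\big((1-\chi)v\big)$ identically, and hence, using $\Delta v=0$,
\[
\div\big(\gamma\nabla((1-\chi)v)\big)=\Delta\big((1-\chi)v\big)=-2\,\nabla\chi\cdot\nabla v-v\,\Delta\chi=:h .
\]
Here $h\in C^\infty_c(\Omega)$ is supported in $\supp\nabla\chi$, a compact subset of $\Omega\setminus K$ on which $v$ is smooth, and interior elliptic estimates for the harmonic function $v=P_1 g$ give $\|h\|_{L^2(\Omega)}\le C(s,K)\,\|g\|_{H^s(\partial\Omega)}$.

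\emph{Step 3 (solve and assemble).} With this choice $u=(1-\chi)v+\tilde w$ satisfies $\int_\Omega\gamma\nabla u\cdot\nabla\varphi=0$ for all $\varphi\in C^\infty_c(\Omega)$ exactly when $\tilde w\in H^1_0(\Omega)$ solves $\div(\gamma\nabla\tilde w)=-h$; existence and uniqueness of $\tilde w$ follow from Lax--Milgram applied to the form $(\varphi,\psi)\mapsto\int_\Omega\gamma\nabla\varphi\cdot\nabla\psi$ on $H^1_0(\Omega)$, whose coercivity constant is $\inf_\Omega\gamma>0$, whence $\|\tilde w\|_{H^1_0(\Omega)}\le (\inf_\Omega\gamma)^{-1}\|h\|_{H^{-1}(\Omega)}\le C(s,K,\inf_\Omega\gamma)\|g\|_{H^s(\partial\Omega)}$. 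Setting $P_\gamma g:=(1-\chi)v+\tilde w=P_1 g+(\tilde w-\chi v)$ defines a linear operator; since $s\le 1/2$ one has $H^1_0(\Omega)\hookrightarrow H^{s+1/2}(\Omega)$, so $P_\gamma g\in H^{s+1/2}(\Omega)$, while $P_\gamma g\in H^1_{\loc}(\Omega)$ because $v\in H^1_{\loc}$ and $\tilde w-\chi v\in H^1(\Omega)$. By construction $P_\gamma g-P_1 g=\tilde w-\chi v\in H^1_0(\Omega)$, so $g$ is attained in the sense of Step 1, and
\[
\|P_\gamma g-P_1 g\|_{H^1_0(\Omega)}\le\|\tilde w\|_{H^1_0(\Omega)}+\|\chi\,P_1 g\|_{H^1(\Omega)}\le C(s,K,\inf_\Omega\gamma)\,\|g\|_{H^s(\partial\Omega)},
\]
where $\|\chi P_1 g\|_{H^1(\Omega)}\le C(s,K)\|g\|_{H^s(\partial\Omega)}$ is again an interior estimate; boundedness of $P_\gamma$ into $H^{s+1/2}(\Omega)$ then follows by adding $\|P_1 g\|_{H^{s+1/2}(\Omega)}$.

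I expect this to be largely routine; the one genuine point is that the source of the $H^1_0$-problem must live where $\gamma\equiv 1$. The naive decomposition $u=P_1 g+w$ with $w$ solving $\div(\gamma\nabla w)=-\div\big((\gamma-1)\nabla P_1 g\big)$ also works, but yields an estimate with a factor $\|\gamma-1\|_{L^\infty(K)}$, i.e.\ a constant depending on $\sup_\Omega\gamma$, which the statement does not allow; the cutoff $\chi$ (tied only to $K$) is precisely what repairs this. The only other thing to spell out is the notion of boundary value for $s<1/2$, which is handled by the convention adopted in Step 1.
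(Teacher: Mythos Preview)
Your argument is correct. Both the paper and you proceed by the same mechanism---write the $\gamma$-harmonic extension as $P_1 g$ plus a correction in $H^1_0(\Omega)$, use interior regularity of the harmonic function $P_1 g$ on the compact set where $\gamma\neq 1$ to control the source term, and apply Lax--Milgram. The paper, however, takes exactly the ``naive'' decomposition $u=P_1 g+w$ that you describe in your closing paragraph: it sets $F(\varphi)=-\langle\gamma\nabla P_1 g,\nabla\varphi\rangle$ (which for $\varphi\in C_c^\infty(\Omega)$ reduces to $-\langle(\gamma-1)\nabla P_1 g,\nabla\varphi\rangle$, an integral over $K$) and invokes Lax--Milgram directly. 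Your introduction of the cutoff $\chi$ is the genuine difference between the two arguments.

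What the cutoff buys is precisely the constant dependence asserted in the lemma. The paper's bound on $F$ inevitably carries a factor $\|\gamma-1\|_{L^\infty(K)}$, so the resulting constant depends on $\sup_\Omega\gamma$ in addition to $\inf_\Omega\gamma$---a point you correctly flagged. Your construction pushes the source $h=-2\nabla\chi\cdot\nabla v-v\Delta\chi$ onto $\supp\nabla\chi\subset\Omega\setminus K$, where $\gamma\equiv 1$, so $h$ is independent of $\gamma$ and the only $\gamma$-dependence enters through the Lax--Milgram coercivity constant $\inf_\Omega\gamma$. In the paper's application this distinction is immaterial (all conductivities under consideration lie in $[\gamma_{\min},\gamma_{\max}]^D$, so $\sup_\Omega\gamma\le\gamma_{\max}$ is uniformly bounded anyway), but your version matches the lemma as stated more faithfully.
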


This is essentially proved in \cite[Lemma 19]{AbNi19}; for the convenience of the reader, we give a brief recap of the proof, referring there for further details.

\begin{proof}[Proof of Lemma \ref{poissondiff}]
For $\gamma=1$, the existence and mapping properties of $P_1$ follow from the standard solution theory of the Laplace equation, as summarised e.g.~in Remark 7.2 of \cite{LiMa72}. For general $\gamma\in L_+^\infty(\Omega)$ with $\gamma=1$ in $\Omega\backslash K$, we say that $u\in H^{s+1/2}(\Omega)\cap H^1_\loc(\Omega)$ is a solution of \eqref{bvp}, if $w=u-P_1 g$ satisfies
\begin{equation*}
w\in H_0^1(\Omega),\quad \langle \gamma \nabla w,\nabla \varphi\rangle = F(\varphi)\text{ for all } \varphi\in H_0^1(\Omega),
\end{equation*}
where $F(\varphi) = \langle -\gamma \nabla P_1g,\nabla \varphi\rangle $ for $\varphi \in C_c^\infty(\Omega)$. One shows, using the mapping properties of $P_1$ and that  $\supp(1-\gamma)\subset K$, that $F$ extends to a continuous linear functional on $H_0^1(\Omega)$ with norm $\Vert F \Vert_{H^{-1}(\Omega)} \le C \Vert g \Vert_{H^s(\partial \Omega)}$, where $C=C(s,K,\inf_\Omega \gamma)>0$. Existence and a priori bounds on $w$ then follow from  Lax-Milgram theory and the lemma is proved upon setting $P_\gamma g = w + P_1g$.
\end{proof}

The next lemma is an application of elliptic regularity theory for the Laplace equation, see Remark 7.2 in \cite{LiMa72} and Lemma 22 in \cite{AbNi19}.

\begin{lemma}\label{harmonic} For  $U\subset \Omega$ open and $s\in \R$, consider the space of harmonic functions $\mathcal{H}^s(U)=\{u\in H^s(U): \Delta u = 0 \}$, equipped with the $H^s(U)$ norm. Then:
\begin{enumerate}[label=(\roman*)]
\item\label{poissoniso} for all $s\in \R$, the operator $P_1\colon H^s(\partial \Omega)\rightarrow \mathcal{H}^{s+1/2}(\Omega)$ is an isomorphism;
\item \label{restrictionharmonic} for all $s,t\in \R$ and any open set $U\subset \bar U \subset \Omega$, restriction $u\mapsto u\vert_{U}$ defines a bounded linear map 
$
\mathcal{H}^s(\Omega)\rightarrow  \mathcal{H}^t(U).
$\qed
\end{enumerate}
\end{lemma}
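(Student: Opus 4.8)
The plan is to prove the two assertions separately: part \ref{poissoniso} is obtained from the classical solvability theory of the Dirichlet problem for $\Delta$ together with a uniqueness statement, and part \ref{restrictionharmonic} from the interior regularity of harmonic functions, made quantitative via a fundamental-solution (Newtonian potential) representation. For \ref{poissoniso}, the first step is to record that $P_1$ maps $H^s(\partial\Omega)$ boundedly into the harmonic subspace $\mathcal H^{s+1/2}(\Omega)$ for \emph{every} $s\in\R$: for $s\ge 1/2$ this is the standard a priori estimate for $\Delta u=0$, $u\vert_{\partial\Omega}=g$ (the image being harmonic by construction), while the extension to the full scale is the transposition/hidden-regularity content of Lions--Magenes already invoked for Lemma \ref{poissondiff} (Remark~7.2 in \cite{LiMa72}). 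The second step is to exhibit a two-sided inverse.

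The natural candidate is the boundary trace $\gamma_0\colon u\mapsto u\vert_{\partial\Omega}$, and this is where the only genuinely non-elementary point sits: although $\gamma_0$ is unbounded on $H^{s+1/2}(\Omega)$ once $s+1/2\le 1/2$, its restriction to $\mathcal H^{s+1/2}(\Omega)$ \emph{is} bounded into $H^s(\partial\Omega)$ for all $s\in\R$ --- this ``hidden regularity'' is precisely where the equation $\Delta u=0$ enters, and it is again part of Remark~7.2 in \cite{LiMa72}. Granting it, $\gamma_0\circ P_1=\mathrm{id}$ on $H^s(\partial\Omega)$ is immediate from the definition of $P_1$, and $P_1\circ\gamma_0=\mathrm{id}$ on $\mathcal H^{s+1/2}(\Omega)$ is the uniqueness of a harmonic function with prescribed trace --- Green's identity (or the weak maximum principle) when $s+1/2\ge 1$, and in the remaining range again part of the cited Lions--Magenes result. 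Hence $P_1$ is an isomorphism onto $\mathcal H^{s+1/2}(\Omega)$.

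For part \ref{restrictionharmonic}, fix an intermediate open set $V$ with $\bar U\subset V$ and $\bar V\subset\Omega$, and a cut-off $\chi\in C_c^\infty(\Omega)$ with $\chi\equiv 1$ on a neighbourhood of $\bar U$ and $\supp\chi\subset V$; set $S=\supp(\nabla\chi)$, a compact subset of $\Omega$ disjoint from $\bar U$. Given $u\in\mathcal H^s(\Omega)$, the compactly supported distribution $\chi u$ satisfies $\Delta(\chi u)=(\Delta\chi)\,u+2\,\nabla\chi\cdot\nabla u$ (using $\Delta u=0$), which is supported in $S$ and obeys $\Vert\Delta(\chi u)\Vert_{H^{s-1}(\R^d)}\le C\Vert u\Vert_{H^s(\Omega)}$ with $C=C(U,V,\Omega)$; consequently, with $\Phi$ the fundamental solution of $\Delta$,
\[
u(x)\;=\;(\chi u)(x)\;=\;\big\langle\,\Delta(\chi u),\,\Phi(x-\cdot)\,\big\rangle,\qquad x\in U .
\]
Since $x$ ranges over $U$ while the distribution lives on $S$, the kernel $y\mapsto\Phi(x-y)$ and all its $x$-derivatives are smooth and bounded on a fixed neighbourhood of $S$, uniformly in $x\in U$; differentiating under the pairing gives $u\in C^\infty(\bar U)$ with $\Vert u\Vert_{C^k(\bar U)}\le C_k\Vert u\Vert_{H^s(\Omega)}$ for every $k$, hence $\Vert u\Vert_{H^t(U)}\le C\Vert u\Vert_{H^s(\Omega)}$ for every $t\in\R$ (as $U$ is bounded), and $u\vert_U$ is harmonic because $u$ is. This is essentially Lemma~22 in \cite{AbNi19}.

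I expect the main obstacle in both parts to be the bookkeeping at negative Sobolev orders: in \ref{poissoniso} the sub-threshold existence of the trace of a harmonic function (the one ingredient we borrow rather than verify), and in \ref{restrictionharmonic} the need to phrase the potential representation and the resulting interior estimates as duality pairings against smooth, compactly supported kernels rather than as honest integrals. Once these are in place, the rest is classical.
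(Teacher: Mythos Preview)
Your proposal is correct and aligns with the paper's approach: the paper gives no proof at all, merely citing Remark~7.2 in \cite{LiMa72} and Lemma~22 in \cite{AbNi19} and closing with \qed, and your argument supplies exactly the content behind those citations (hidden regularity/transposition for the Poisson operator and trace in part~\ref{poissoniso}, interior elliptic regularity via a cutoff and fundamental-solution representation in part~\ref{restrictionharmonic}). There is nothing to correct; your write-up is simply more explicit than the paper's.
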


\begin{proof}[Proof of Proposition \ref{smoothing}]
Let $g,h\in C^\infty(\partial \Omega)$ and suppose that $U$ is an open set with $K\subset U \subset \bar U \subset \Omega$. Consider $u=P_\gamma g$ and  $v= P_\gamma h$, both elements in $H^1(\Omega)$. Then by the H{\"o}lder inequality,
\begin{equation*}\label{smoothing1}
 \vert \langle d\Lambda_\gamma(\kappa)g,h\rangle \vert =\vert  \langle \kappa \nabla u ,\nabla v\rangle\vert \le \Vert \kappa \Vert_{L^\infty(\Omega)} \Vert u \Vert_{H^1(U)}\Vert v \Vert_{H^1(U)},
\end{equation*}
because $\supp \kappa \subset U$. Then, for any $s\in (\infty,1/2]$ we have
\begin{eqnarray*}
\Vert u \Vert_{H^1(U)} &\le & \Vert (P_\gamma - P_1)g\Vert_{H^1(\Omega)} + \Vert P_1g \Vert_{H^1(U)}\\
&\le & C \left\{\Vert g \Vert_{H^s(\partial \Omega)} + \Vert P_1g \Vert_{H^{s+1/2}(\Omega)} \right\}\\
&\le & C' \Vert g \Vert_{H^s(\partial \Omega)} 
\end{eqnarray*}
with constants $C,C'>0$, depending at most on $s,K,U,$ and $\inf_\Omega \gamma$. Here we used Lemma \ref{poissondiff} and Lemma \ref{harmonic}\ref{restrictionharmonic} for the second estimate and part \ref{poissoniso} of that lemma for the third one. Bounding $v$ in the same way, we see that for all $s,r\in (-\infty,1/2]$,
\begin{equation}
\vert \langle d\Lambda_\gamma(\kappa)g,h\rangle \vert \le C'' \Vert g \Vert_{H^s(\partial\Omega)}\Vert h \Vert_{H^r(\partial\Omega)},
\end{equation}
with $C''$ depending at most on $s,r,K,U$ and $\inf_\Omega \gamma$. This shows that $d\Lambda_\gamma(\kappa)$ extends to a bounded operator from $H^s(\partial\Omega)$ to $H^{-r}(\partial \Omega)$, with norm $\le C''$. This encompasses the desired mapping property for all $s,t\in \R$.
\end{proof}

\subsection{Piecewise constant conductivites} 
\begin{theorem}\label{checkproperties} The space $E_D$ of piecewise constant conductivites from \eqref{defe} satisfies properties \ref{e1},\ref{e2} and \ref{e3}.
\end{theorem}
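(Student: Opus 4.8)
The plan is to establish the three properties \ref{e1}, \ref{e2}, \ref{e3} one at a time; the first two carry the genuine analytical content and I would deduce them from the existing theory for piecewise analytic conductivities on a fixed partition, whereas the third is a short consequence of Proposition \ref{smoothing}. Throughout, note that by \eqref{defe} every $\gamma\in E_D\cap L^\infty_+(\Omega)$ is constant on each of the fixed pieces $\Omega_0,\dots,\Omega_D$, equals $1$ on $\Omega_0$, and is bounded away from $0$; in particular it is piecewise real-analytic with respect to the given partition and coincides with $1$ on $\Omega\setminus K$, where $K:=\bigcup_{i=1}^D\bar\Omega_i\Subset\Omega$. For \ref{e1} I would then invoke the uniqueness theory for piecewise analytic conductivities subordinate to a known partition \cite{Har19} (see also \cite{AlSa21}); in classical form this goes back to \cite{KoVo85}, and in dimension $d=2$ the uniqueness of \cite{AsPa06} for arbitrary $L^\infty_+$-conductivities already applies. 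No new argument is needed beyond observing that $E_D$ is a subclass of the conductivities treated in those works (and that $\theta\mapsto\gamma_\theta$ is injective).

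For \ref{e2} I would fix $\gamma\in E_D\cap L^\infty_+(\Omega)$ and recall that the tangent space $E_D'=\mathrm{span}\{1_{\Omega_1},\dots,1_{\Omega_D}\}$ is $D$-dimensional. Suppose $\kappa=\sum_{i=1}^D c_i 1_{\Omega_i}\in E_D'$ lies in the kernel of $d\Lambda_\gamma$. Evaluating \eqref{defdlambda} on an arbitrary $g\in H^{1/2}(\partial\Omega)$ gives $\sum_{i=1}^D c_i\int_{\Omega_i}\abs{\nabla u^g}^2=0$, with $u^g$ the $\gamma$-harmonic extension of $g$ as in \eqref{defdlambda}. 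If some coefficient were non-zero, pick $i_0$ with $\abs{c_{i_0}}=\max_i\abs{c_i}>0$; replacing $\kappa$ by $-\kappa$ if necessary we may assume $c_{i_0}>0$. The localized-potentials technique (as in \cite{Har19}; see also \cite{AlSa21,AAS20}) then produces $\gamma$-harmonic functions $u_k$ with $\int_{\Omega_{i_0}}\abs{\nabla u_k}^2\to\infty$ while $\int_{\Omega_j}\abs{\nabla u_k}^2$ stays bounded for every $j\neq i_0$; the geometric configuration needed for this is admissible because $\Omega\setminus K$ is open with $\partial\Omega\subset\bar\Omega_0\subset\overline{\Omega\setminus K}$ (using $\bar\Omega_i\subset\Omega$ for $i\ge 1$) and $\Omega_0$ connected. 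Feeding such $u_k$ into the displayed identity forces $\sum_i c_i\int_{\Omega_i}\abs{\nabla u_k}^2\to\infty$, contradicting its vanishing; hence $\kappa=0$. (Equivalently, \ref{e1} and \ref{e2} together are precisely what is established, for the piecewise analytic class, in \cite{Har19}.)

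For \ref{e3} I would invoke Proposition \ref{smoothing} directly. Given $\gamma\in E_D\cap L^\infty_+(\Omega)$ and $\kappa\in E_D'$, we have $\gamma=1$ on $\Omega\setminus K$ and $\kappa=0$ on $\Omega\setminus K$ with $K$ the fixed compact set above, so the proposition shows that $d\Lambda_\gamma(\kappa)$ extends to a bounded operator $H^s\to H^t$ for all $s,t\in\R$, with $\norm{d\Lambda_\gamma(\kappa)}_{H^s\to H^t}\le C$ for a constant depending only on $s,t,K,\norm{\kappa}_{L^\infty(\Omega)}$ and $\inf_\Omega\gamma$. Since $K$ is fixed once and for all, since $\inf_\Omega\gamma\ge\gamma_\min>0$ uniformly over $\Theta$, since $\norm{\kappa}_{L^\infty(\Omega)}=\max_i\abs{c_i}$ is bounded on bounded sets of coefficient vectors, and since the proof of Proposition \ref{smoothing} shows $C$ may be taken linear in $\norm{\kappa}_{L^\infty(\Omega)}$, the resulting bound is locally (in fact globally) uniform in the pair $(\gamma,\kappa)$; this is exactly \ref{e3}.

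The genuine difficulty — uniqueness of $\gamma\mapsto\Lambda_\gamma$ and injectivity of its linearisation on a finite-dimensional class — is not something I would re-prove here: it is the content of \cite{KoVo85,AsPa06,Har19,AlSa21,AAS20}, and the only real task is to check that piecewise constant conductivities subordinate to a fixed partition, normalised to equal $1$ on the near-boundary region $\Omega_0$, fall within the scope of those results and of Proposition \ref{smoothing}. The one point requiring a little care is the geometric admissibility of the partition for the localized-potentials argument underlying \ref{e2}; I would settle it using the standing assumptions $\bar\Omega=\bar\Omega_0\cup\dots\cup\bar\Omega_D$ and $\bar\Omega_i\subset\Omega$ for $i\ge 1$, which guarantee that $\Omega_0$, and hence $\Omega\setminus K$, reaches the boundary.
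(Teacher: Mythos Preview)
Your treatment of \ref{e1} and \ref{e3} is essentially identical to the paper's (citation of existing uniqueness results for \ref{e1}; direct application of Proposition \ref{smoothing} for \ref{e3}), and that part is fine.

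The gap is in your argument for \ref{e2}. You select the index $i_0$ by \emph{maximal coefficient} and then claim that localized potentials yield $\gamma$-harmonic $u_k$ with $\int_{\Omega_{i_0}}|\nabla u_k|^2\to\infty$ while $\int_{\Omega_j}|\nabla u_k|^2$ stays bounded for all $j\neq i_0$. But the localized-potentials theorems (e.g.\ \cite[Theorem 2.7]{Geb08}, and the variants in \cite{Har19}) carry a geometric hypothesis: roughly, the target set $V_1$ must be connected to $\partial\Omega$ through a region disjoint from $V_2$. Taking $V_1=\Omega_{i_0}$ and $V_2=\bigcup_{j\neq i_0,\,j\ge 1}\Omega_j$, this fails whenever $\Omega_{i_0}$ is screened from the boundary by other pieces---think of $\Omega_1$ a ball enclosed by a shell $\Omega_2$, with $\Omega_0$ the outer layer; if $c_1=1,\ c_2=-\tfrac12$, your $i_0=1$, yet $\Omega\setminus\bar\Omega_2$ is disconnected and no such sequence $u_k$ exists. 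The size of $|c_{i_0}|$ is irrelevant here; only the geometry matters. Your justification (``$\Omega_0$ connected and reaches $\partial\Omega$'') only says the complement of $K$ touches the boundary, not that $\Omega_{i_0}$ is reachable through it.

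The paper avoids this by a different construction: instead of singling out one piece, it produces a connected subdomain $U$ touching $\partial\Omega$ on which (after possibly flipping the sign of $\kappa$) one has $\kappa\ge 0$ and $\kappa\not\equiv 0$; then $V_1$ is a small ball inside $U$ where $\kappa>0$ and $V_2=\Omega\setminus U$. Now the geometric hypothesis of \cite[Theorem 2.7]{Geb08} is satisfied by design, and the sign condition on $U$ makes the energy estimate go through. Your argument can be repaired along these lines---e.g.\ by replacing ``largest $|c_i|$'' with an outermost piece (one reachable from $\Omega_0$ through pieces on which $\kappa$ vanishes), or by adopting the paper's sign-region construction---but as written the step ``localized potentials then produce\dots'' is not justified.
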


\begin{proof}[Proof of \ref{e1}] \renewcommand{\qedsymbol}{}
Injectivity results on piecewise constant conductivities go back at least to \cite{KoVo85}; for a setting that encompasses piecewise smooth boundaries $\partial \Omega_0,\dots,$ $ \partial \Omega_D$ we refer to \cite[Theorem 2.7]{AlVe05}. (In $d=2$, article \cite{AsPa06} settles the injectivity question on {\it all of} $L_+^\infty(\Omega)$ and regularity assumptions on $\partial \Omega_0,\dots, \partial \Omega_D$ become obsolete.)
\end{proof}

\begin{proof}[Proof of \ref{e2}]  \renewcommand{\qedsymbol}{}
We use the technique of \cite{LeRi08}, where linearised injectivity was proved in a slightly different context.  Assume that $d\Lambda_\gamma(\kappa)=0$ for some $\gamma \in E_D\cap L^\infty_+(\partial \Omega)$ and some $\kappa \in E'_D$ with $\kappa\neq 0$. Then by a simple geometric argument, and after replacing $\kappa$ with $-\kappa$ if necessary, there exists a subdomain $U\subset \Omega$
such that
\begin{equation}
0\neq \kappa \ge 0 \text{ on } U,\quad \bar U \cap \partial \Omega \neq \emptyset \quad \text{ and } \quad  U \text{ is connected}.
\end{equation}
Let $V_1\subset U$ be a small ball on which $\kappa>0$ and put $V_2=\Omega\backslash U$. Then $V_1$ and $V_2$ satisfy the properties of \cite[Theorem 2.7]{Geb08} and as any $\gamma\in E_D$ has the unique continuation property, the just cited theorem yields a sequence $(u_n)\subset H^1(\Omega)$ of solutions to $\div(\gamma \nabla u)=0$ on $\Omega$, with energies satisfying $\int_{V_1} \vert \nabla u_n \vert^2 \rightarrow \infty$ and $\int_{V_2} \vert \nabla u_n\vert^2 \rightarrow 0$ as $n\rightarrow \infty$.  This leads to a contradiction, as $g_n=u_n\vert_{\partial \Omega} \in H^{1/2}(\partial \Omega)$ satisfies
\begin{equation*}
0 = \langle d\Lambda_\gamma(\kappa)g_n,g_n\rangle = \int_\Omega \kappa \vert \nabla u_n \vert^2 \ge \min_{V_1}\kappa \int_{V_1} \vert \nabla u_n\vert^2 - \Vert \kappa \Vert_\infty \int_{V_2} \vert \nabla u_n\vert^2 \rightarrow \infty.
\end{equation*}
Here we used the representation of $d\Lambda_\gamma(\kappa)$ from Proposition \ref{linearisation}.
\end{proof}

\begin{proof}[Proof of \ref{e3}] This is a direct consequence Proposition \ref{smoothing} above.
\end{proof}

\section{Finite number of electrodes}

This section is based on the articles \cite{AlSa21,AAS20}, which develop a general theory of infinite-dimensional inverse problems with finite (noiseless) measurements. In our context, this will allow to formulate sufficient {\it a priori} conditions on the set of electrodes $\mathbf{J}$ in terms of the quantity $\Delta(\mathbf{J})$.

\subsection{Projection operators}\label{projsec} Let $\mathbf{J}=\{J_1,\dots, J_M\}$ be a collection of pairwise disjoint, measurable subsets of $\partial \Omega$ of positive measure. Writing $L^2$ for $L^2(\partial \Omega)$, we define the following projection operators:
\begin{eqnarray}
&P_\mathbf{J}\colon L^2(\partial \Omega)\rightarrow L^2(\partial \Omega),\quad 
&P_{\bf J}g=\sum_{k=1}^M \frac{1_{J_k}}{\vert J_k\vert} \int_{J_k} g,\\
&Q_{\bf J}\colon \B(L^2)\rightarrow \B(L^2),\quad &Q_{\bf J}T =  P_{\bf J}T P_{\bf J} \label{defqj}
\end{eqnarray}
We seek out norm bounds for $Q_\J$ and $I-Q_\J$ in terms of $\Delta(\J)$ from \eqref{Delta}. To simplify the statements below, we assume that the norms $\Vert \cdot \Vert_{H^s}$ on $H^s(\partial \Omega)$ are chosen to be increasing in $s$, and such that
\begin{equation}\label{sobemb}
\Vert g \Vert_{\infty} + \sup_{x,y\in \partial \Omega, x\neq y}\frac{g(x)-g(y)}{\vert x - y \vert} \le  \Vert g \Vert_{H^s},\quad \text{ for }g\in H^s(\partial \Omega),  s\ge d/2+1,
\end{equation}
which is possible by the Sobolev embedding theorem.

\begin{lemma}\label{qnorm} For all $T\in \B(L^2)$ we have
\begin{equation*}
\Vert Q_\J T \Vert_{L^2\rightarrow L^2}^2 \le \sum_{i,j=1}^M T_{ij}^2,\quad \text{ where } T_{ij}=\frac{1}{(\vert J_1 \vert \vert J_j\vert)^{1/2}}\langle T 1_{J_i},1_{J_j}\rangle_{L^2}.
\end{equation*}
\end{lemma}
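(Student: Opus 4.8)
The plan is to unfold the definition of $Q_\J T = P_\J T P_\J$ and compute its action on an arbitrary $g\in L^2$ explicitly. Since $P_\J g = \sum_{k=1}^M \frac{1_{J_k}}{|J_k|}\int_{J_k} g$ is supported on $\bigsqcup_k J_k$ and constant on each $J_k$, the operator $T$ only sees this piecewise-constant input, and then $P_\J$ averages the output again over the $J_i$. So $Q_\J T g$ will be a piecewise-constant function of the form $\sum_{i} c_i(g)\, \frac{1_{J_i}}{|J_i|}$, where each coefficient $c_i(g)$ is a linear combination of the inner products $\langle T 1_{J_j}, 1_{J_i}\rangle$ weighted by the averages $\frac{1}{|J_j|}\int_{J_j} g$.

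First I would write, for $g\in L^2$,
\begin{equation*}
Q_\J T g = \sum_{i=1}^M \frac{1_{J_i}}{|J_i|}\Big\langle T\Big(\sum_{j=1}^M \frac{1_{J_j}}{|J_j|}\int_{J_j} g\Big),\, 1_{J_i}\Big\rangle_{L^2} = \sum_{i=1}^M \frac{1_{J_i}}{|J_i|} \sum_{j=1}^M \frac{\langle T 1_{J_j}, 1_{J_i}\rangle_{L^2}}{|J_j|}\int_{J_j} g.
\end{equation*}
Introduce the scalars $a_j = a_j(g) = \frac{1}{(|J_j|)^{1/2}}\int_{J_j} g$; by Cauchy--Schwarz on $J_j$ one has $|a_j|^2 \le \int_{J_j} g^2$, hence $\sum_j a_j^2 \le \|g\|_{L^2}^2$ since the $J_j$ are pairwise disjoint. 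With $T_{ij}$ as defined in the statement, the displayed identity becomes $Q_\J T g = \sum_i \frac{1_{J_i}}{|J_i|^{1/2}} \big(\sum_j T_{ij} a_j\big)$, where I note $\|\frac{1_{J_i}}{|J_i|^{1/2}}\|_{L^2} = 1$ and these functions are orthonormal. Therefore $\|Q_\J T g\|_{L^2}^2 = \sum_i \big(\sum_j T_{ij} a_j\big)^2$, which by Cauchy--Schwarz in the $j$-sum is at most $\big(\sum_{i,j} T_{ij}^2\big)\big(\sum_j a_j^2\big) \le \big(\sum_{i,j} T_{ij}^2\big)\|g\|_{L^2}^2$. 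Taking the supremum over $\|g\|_{L^2}\le 1$ gives the claimed bound.

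There is essentially no serious obstacle here: the statement is a clean Hilbert--Schmidt-type estimate and the only things to be careful about are the normalisation constants ($|J_j|^{1/2}$ versus $|J_j|$, matching the definition of $T_{ij}$) and the orthogonality of the normalised indicators $1_{J_i}/|J_i|^{1/2}$, which follows from disjointness of the $J_i$. One should also remark that $P_\J$ is self-adjoint, so that $\langle T P_\J g, 1_{J_i}\rangle = \langle T(\sum_j \frac{1_{J_j}}{|J_j|}\int_{J_j}g), 1_{J_i}\rangle$ as used above; alternatively one applies the definition of $P_\J$ directly to the left slot. The bound is exactly the statement that $\|Q_\J T\|_{L^2\to L^2}$ is controlled by the Frobenius norm of the ``sampled matrix'' $(T_{ij})$, which is the natural quantity appearing later when one relates $\Delta(\J)$-smallness to approximation of $T$ by $Q_\J T$.
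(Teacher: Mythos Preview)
Your proof is correct and follows essentially the same route as the paper: both arguments introduce the orthonormal functions $\psi_i=1_{J_i}/|J_i|^{1/2}$, rewrite $Q_\J T g$ in this basis, and apply Cauchy--Schwarz together with Bessel's inequality $\sum_j a_j^2\le \|g\|_{L^2}^2$. The only cosmetic slip is an index transposition (your coefficient is $\sum_j T_{ji}a_j$ rather than $\sum_j T_{ij}a_j$), which is immaterial since the final bound $\sum_{i,j}T_{ij}^2$ is symmetric in $i,j$.
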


\begin{proof}
Note that the functions $\psi_i=1_{J_i}/\vert J_i\vert^{1/2}$ ($i=1,\dots, M$) are orthonormal and that $P_\J g = \sum_{i=1}^M \psi_i \langle g,\psi_i\rangle_{L^2}$ for all $g\in L^2$. Hence, by Cauchy-Schwarz,
\begin{eqnarray*}
\Vert Q_\J T g\Vert_{L^2}^2 &=&\sum_{i=1}^M \langle \psi_i,TP_\J g\rangle_{L^2}^2 = \sum_{i=1}^M  \left( \sum_{j=1}^M \langle \psi_i,T\psi_j\rangle_{L^2} \langle g,\psi_j\rangle_{L^2} \right)^2\\
&\le & \sum_{i=1}^M \left(\sum_{j=1}^M T_{ij}^2\right)\left(\sum_{j=1}^M \langle g,\psi_j\rangle^2\right) \le \Vert g \Vert_{L^2}^2 \sum_{i,j=1}^M T_{ij}^2. 
\end{eqnarray*}
\end{proof}

\begin{lemma}\label{qerror} Let $s_*=d/2+1$.

\begin{enumerate}[label=(\roman*)]
\item \label{proj1} Let $s\ge s_*$ and suppose that $g\in H^s(\partial \Omega)$. Then
\begin{equation*}
\Vert (I-P_{\bf J}) g\Vert_{L^2(\partial \Omega)} \le  (1+\vert \partial \Omega\vert )^{1/2}    \Vert g \Vert_{H^s} \cdot \Delta({\bf J})
\end{equation*}
\item \label{proj2} Suppose that $T$ is a smoothing operator on $\partial \Omega$, such that
\begin{equation*}
\vertiii{T} =  (1+\vert \partial \Omega\vert )^{1/2}  \left(\Vert T \Vert_{L^2\rightarrow H^{s*}} +  \Vert T \Vert_{H^{-s_*}\rightarrow L^2}\right)  <\infty.
\end{equation*}
Then
\begin{equation*}
\Vert( I - Q_\J )T \Vert_{\B(L^2)} \le \Delta(\J)\cdot \vertiii{T}.
\end{equation*}
\end{enumerate}

\end{lemma}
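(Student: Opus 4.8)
The statement has two parts. Part \ref{proj1} is a quantitative approximation estimate for the averaging projection $P_{\mathbf J}$ on smooth functions, and part \ref{proj2} is the corresponding operator-level statement for $Q_{\mathbf J} = P_{\mathbf J} T P_{\mathbf J}$, obtained by composing the one-sided bound from \ref{proj1} with the smoothing hypothesis on $T$. I would prove \ref{proj1} first and then bootstrap to \ref{proj2}.

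\textbf{Proof of \ref{proj1}.} Fix $g \in H^s(\partial\Omega)$ with $s \ge s_* = d/2+1$; by \eqref{sobemb} this means $g$ is bounded and Lipschitz with constant $\le \Vert g\Vert_{H^s}$. Split $\partial\Omega = \big(\bigcup_k J_k\big) \cup R$ where $R = \partial\Omega \setminus \bigcup_k J_k$. On $R$ we have $P_{\mathbf J}g = 0$, so that part contributes $\int_R |g|^2 \le \Vert g\Vert_\infty^2 |R| \le \Vert g\Vert_{H^s}^2 |R|$. On each $J_k$, the function $(I - P_{\mathbf J})g = g - |J_k|^{-1}\int_{J_k} g$ is $g$ minus its average over $J_k$; since $g$ is Lipschitz, $|g(x) - |J_k|^{-1}\int_{J_k} g| \le \mathrm{Lip}(g)\,\diam(J_k) \le \Vert g\Vert_{H^s}\,\diam(J_k)$ pointwise on $J_k$. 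Hence $\int_{J_k}|(I-P_{\mathbf J})g|^2 \le \Vert g\Vert_{H^s}^2 \diam(J_k)^2 |J_k|$. Summing over $k$ and adding the $R$-contribution,
\[
\Vert (I-P_{\mathbf J})g\Vert_{L^2}^2 \le \Vert g\Vert_{H^s}^2 \Big( |R| + \sup_k \diam(J_k)^2 \sum_k |J_k| \Big) \le \Vert g\Vert_{H^s}^2 \big(|R| + |\partial\Omega|\, \Delta(\mathbf J)^2\big).
\]
Since $|R|^{1/2} \le \Delta(\mathbf J)$ and $\sup_k \diam(J_k) \le \Delta(\mathbf J)$, one gets $|R| + |\partial\Omega|\Delta(\mathbf J)^2 \le (1 + |\partial\Omega|)\Delta(\mathbf J)^2$, and taking square roots yields the claimed bound. (The mild sub-additivity shuffling $(a+b)^{1/2} \le \ldots$ is where the constant $(1+|\partial\Omega|)^{1/2}$ comes from; one just has to be a little careful that $\Delta(\mathbf J)$ is defined with $|R|^{1/2}$ rather than $|R|$.)

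\textbf{Proof of \ref{proj2}.} For $g, h \in L^2$ write $I - Q_{\mathbf J} = (I - P_{\mathbf J}) T P_{\mathbf J} + P_{\mathbf J} T (I - P_{\mathbf J}) + (I-P_{\mathbf J})T(I-P_{\mathbf J})$; actually it is cleanest to use the two-term identity $T - P_{\mathbf J}TP_{\mathbf J} = (I-P_{\mathbf J})T + P_{\mathbf J}T(I-P_{\mathbf J})$ and estimate $\langle (I-Q_{\mathbf J})T g, h\rangle$ by duality. For the first term, $|\langle (I-P_{\mathbf J})Tg, h\rangle| = |\langle Tg, (I-P_{\mathbf J})h\rangle|$; since $Tg \in H^{s_*}$ with $\Vert Tg\Vert_{H^{s_*}} \le \Vert T\Vert_{L^2 \to H^{s_*}}\Vert g\Vert_{L^2}$, the self-adjointness of $P_{\mathbf J}$ on $L^2$ lets me move $(I-P_{\mathbf J})$ onto $Tg$ instead; applying \ref{proj1} to $Tg$ gives a bound $(1+|\partial\Omega|)^{1/2}\Vert T\Vert_{L^2\to H^{s_*}} \Delta(\mathbf J)\Vert g\Vert_{L^2}\Vert h\Vert_{L^2}$, using $\Vert (I-P_{\mathbf J})h\Vert_{L^2} \le \Vert h\Vert_{L^2}$ ($P_{\mathbf J}$ is an orthogonal projection). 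For the second term $\langle P_{\mathbf J}T(I-P_{\mathbf J})g, h\rangle = \langle (I-P_{\mathbf J})g, T^* P_{\mathbf J} h\rangle$; here I apply \ref{proj1} to the left factor, but now I need $(I-P_{\mathbf J})g$ paired against something in a negative Sobolev space — the cleaner route is to note $\Vert T(I-P_{\mathbf J})g\Vert_{L^2} \le \Vert T\Vert_{H^{-s_*}\to L^2}\Vert (I-P_{\mathbf J})g\Vert_{H^{-s_*}} \le \Vert T\Vert_{H^{-s_*}\to L^2}\Vert (I-P_{\mathbf J})g\Vert_{L^2}$ provided $\Vert\cdot\Vert_{H^{-s_*}} \le \Vert\cdot\Vert_{L^2}$ (which holds by the monotonicity convention on the norms, $s\mapsto \Vert\cdot\Vert_{H^s}$ increasing). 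But $(I-P_{\mathbf J})g$ is not in $H^{s_*}$ in general, so \ref{proj1} does not directly apply to it — instead use $\Vert (I-P_{\mathbf J}) g\Vert_{L^2} \le \Vert g\Vert_{L^2}$ and route the smoothing through $P_{\mathbf J}h$: write the second term as $\langle T(I-P_{\mathbf J})g, P_{\mathbf J}h\rangle$ and instead bound $|\langle (I-P_{\mathbf J})T^* P_{\mathbf J}h, g\rangle|$ after taking adjoints, which puts the smooth factor $T^* P_{\mathbf J} h$ (in $H^{s_*}$, norm $\le \Vert T\Vert_{H^{-s_*}\to L^2}\Vert h\Vert_{L^2}$ using $\Vert P_{\mathbf J}h\Vert_{H^{-s_*}} \le \Vert h\Vert_{L^2}$ — careful, $P_{\mathbf J}h$ is piecewise constant hence in every $H^{-s}$ with controlled norm, but the clean bound uses $\Vert T^*\Vert_{H^{-s_*}\to L^2} = \Vert T\Vert_{L^2\to H^{s_*}}$ hmm) in position for \ref{proj1}. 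Carrying this through, each of the two terms is bounded by $\tfrac12\vertiii{T}\Delta(\mathbf J)\Vert g\Vert_{L^2}\Vert h\Vert_{L^2}$ and summing gives $\Vert (I-Q_{\mathbf J})T\Vert_{\B(L^2)} \le \Delta(\mathbf J)\vertiii{T}$.

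\textbf{Main obstacle.} Part \ref{proj1} is routine (it is just ``Lipschitz functions are well approximated by their local averages'' plus a measure estimate on the uncovered region). The only real care needed is in part \ref{proj2}: one must split $T - P_{\mathbf J}TP_{\mathbf J}$ so that in each term the operator $T$ (or $T^*$) is what converts a plain $L^2$ function into an $H^{s_*}$ function, on which \ref{proj1} can bite, while the ``rough'' factor $(I-P_{\mathbf J})$ only ever acts on something that is already in $H^{s_*}$ — never the other way around, since \ref{proj1} fundamentally requires $H^{s_*}$ regularity of its argument. Getting the bookkeeping of adjoints and the monotone-norm convention straight so that exactly the two quantities $\Vert T\Vert_{L^2\to H^{s_*}}$ and $\Vert T\Vert_{H^{-s_*}\to L^2}$ appear (and with the factor $(1+|\partial\Omega|)^{1/2}$ already absorbed into $\vertiii{T}$) is the crux. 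I expect the final write-up to use the two-term identity and to invoke \ref{proj1} once per term, each time on the image of $T$ resp. $T^*$ applied to a $P_{\mathbf J}$-truncated vector.
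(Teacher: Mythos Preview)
Your argument for part \ref{proj1} is correct and is exactly what the paper does: split $\partial\Omega$ into the uncovered region and the $J_k$'s, use the Lipschitz bound from \eqref{sobemb} on each $J_k$, and collect the pieces into $(1+|\partial\Omega|)\Delta(\mathbf J)^2\|g\|_{H^s}^2$.

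For part \ref{proj2} your two-term decomposition $T-P_{\mathbf J}TP_{\mathbf J}=(I-P_{\mathbf J})T+P_{\mathbf J}T(I-P_{\mathbf J})$ and your treatment of the first term also match the paper. The second term, however, is where your write-up loses its footing. The clean route, and the one the paper takes, avoids $T^*$ entirely: bound
\[
\|P_{\mathbf J}T(I-P_{\mathbf J})g\|_{L^2}\le \|T\|_{H^{-s_*}\to L^2}\,\|(I-P_{\mathbf J})g\|_{H^{-s_*}},
\]
and then estimate the negative-order norm by duality,
\[
\|(I-P_{\mathbf J})g\|_{H^{-s_*}}=\sup_{\|h\|_{H^{s_*}}\le 1}\langle g,(I-P_{\mathbf J})h\rangle_{L^2}\le \|g\|_{L^2}\cdot(1+|\partial\Omega|)^{1/2}\Delta(\mathbf J),
\]
using self-adjointness of $I-P_{\mathbf J}$ and applying part \ref{proj1} to the \emph{smooth} test function $h$. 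This is precisely the ``$(I-P_{\mathbf J})$ must hit something in $H^{s_*}$'' principle you identify, executed without detouring through $T^*$. Your adjoint approach can be made to work as well, since $\|T^*\|_{L^2\to H^{s_*}}=\|T\|_{H^{-s_*}\to L^2}$ by duality, but the paper's version is shorter and sidesteps the bookkeeping you got tangled in.

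One correction: the two terms are \emph{not} each bounded by $\tfrac12\vertiii{T}\Delta(\mathbf J)$. The first contributes $(1+|\partial\Omega|)^{1/2}\|T\|_{L^2\to H^{s_*}}\Delta(\mathbf J)$ and the second contributes $(1+|\partial\Omega|)^{1/2}\|T\|_{H^{-s_*}\to L^2}\Delta(\mathbf J)$; their sum is exactly $\vertiii{T}\Delta(\mathbf J)$ by definition of $\vertiii{\cdot}$.
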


\begin{proof}
Let $J_0=\partial \Omega\backslash \bigcup_{k=1}^M J_k $ and choose points $x_k\in J_k$ such that $\int_{J_k} g  = g(x_k) \vert J_k \vert$ for all $k=1,\dots,M$. The indicator functions of $J_0,\dots,J_M$ are orthogonal and hence
\begin{equation*}
\Vert (I-P_{\bf J}) g\Vert_{L^2(\partial \Omega)}^2 = \Vert  g\Vert_{L^2(J_0)}^2 + \sum_{k=1}^M \Vert g- g(x_k) \Vert_{L^2(J_k)}^2.
\end{equation*}
For $x\in J_k$ we have $\vert g(x) - g(x_k)\vert \le \diam(J_k)\cdot \Vert g \Vert_{H^s}$ by \eqref{sobemb} and thus we can bound the previous display by
\begin{equation*}
\le \vert J_0 \vert \cdot \Vert g \Vert_\infty^2 + \Vert g \Vert_{H^s}^2 \cdot \sup_{k=1,\dots, M} \diam(J_k)^2 \cdot \sum_{k=1}^M \vert J_k\vert \le  (1+\vert \partial \Omega\vert ) \cdot \Delta(\J)^2 \cdot \Vert g \Vert_{H^s}^2,
\end{equation*}
which proves \ref{proj1}. Next, note that $\Vert P_J \Vert_{L^2\rightarrow L^2} \le 1$ and write $c_\J = (1+\vert \partial \Omega \vert)^{1/2} \Delta(\J)$. Then for $g\in L^2(\partial \Omega)$ it holds that
\begin{eqnarray*}
\Vert (I-Q_\J)T g \Vert_{L^2} &\le &\Vert (I-P_\J) T g \Vert_{L^2} + \Vert P_JT(I-P_\J)g\Vert_{L^2}\\
& \le & c_\J \Vert T g \Vert_{H^{s*}} + \Vert T \Vert_{H^{-s^*}\rightarrow L^2} \Vert (I-P_\J)g\Vert_{H^{-s^*}}.
\end{eqnarray*}
Now $\Vert (I-P_\J)g\Vert_{H^{-s^*}} = \sup \langle (I-P_\J)g,h\rangle_{L^2}$, where the supremum is taken over $h\in H^{s_*}(\partial \Omega)$ with $\Vert h \Vert_{H^{s_*}}\le 1$. Using self-adjointness of $(I-P_\J)$, the Cauchy-Schwarz inequality and part \ref{proj1}, this can be bounded by $c_\J \Vert g \Vert_{L^2}$. Hence the previous display continues with
\begin{equation*}
\le c_\J \left(\Vert T \Vert_{L^2\rightarrow H^{s_*}} + \Vert T \Vert_{H^{-s_*}\rightarrow L^2}\right) \cdot \Vert g \Vert_{L^2},
\end{equation*}
as desired.
\end{proof}

\subsection{Stability estimates and finite measurements} In this section $E_D\subset L^\infty(\Omega)$ is an arbitrary $D$-dimensional linear subspace.

On $E_D$, the stability estimates required in the setting of \cite{AlSa21,AAS20} are immediate corollaries of the corresponding injectivity results. This is a consequence of the compactness argument from \cite{Bou13} (extended to non-convex compacts in \cite{AAS20}); precisely:

\begin{proposition} \label{stabilityforfree} Suppose properties \ref{e1} and \ref{e2} are satisfied and $K\subset E_D\cap L^\infty_+(\Omega)$ is compact.
Then for all $\gamma,\gamma' \in K$ and $\kappa\in E'_D$ it holds that
\begin{eqnarray}
\Vert \gamma - \gamma'\Vert_{\infty} &\le & S_1  \Vert \Lambda_{\gamma} - \Lambda_{\gamma'} \Vert_{H^{1/2}\rightarrow H^{-1/2}}\\
\Vert \kappa \Vert_{\infty} & \le & S_2 \Vert d\Lambda_\gamma(\kappa) \Vert_{H^{1/2}\rightarrow H^{-1/2}}
\end{eqnarray}
for constants $S_1,S_2>0$ only depending on $K$ and $E_D$.
\end{proposition}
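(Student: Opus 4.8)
The plan is to derive both estimates from a single abstract compactness principle: if $F\colon K\to X$ is a continuous injective map from a compact metric space into a normed space, then $F$ is ``stable'', i.e.\ there is $S>0$ with $\|a-b\|_K\le S\|F(a)-F(b)\|_X$ whenever the left-hand side is replaced by any continuous nonnegative function vanishing exactly on the diagonal — in particular the two estimates will follow once injectivity (properties \ref{e1}, \ref{e2}) is in hand and the relevant domains are seen to be compact. This is precisely the argument of \cite{Bou13}, extended to non-convex compacts in \cite{AAS20}, so I would invoke it rather than reprove it.

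For the first estimate I would argue as follows. Consider the map $\Phi\colon K\times K\to[0,\infty)$, $\Phi(\gamma,\gamma')=\|\Lambda_\gamma-\Lambda_{\gamma'}\|_{H^{1/2}\to H^{-1/2}}$, which is continuous by the Fréchet differentiability statement in the linearisation Proposition (continuity of $\gamma\mapsto\Lambda_\gamma$ suffices). Suppose no constant $S_1$ works; then there are sequences $\gamma_n,\gamma_n'\in K$ with $\|\gamma_n-\gamma_n'\|_\infty> n\,\Phi(\gamma_n,\gamma_n')$. Since $\|\gamma-\gamma'\|_\infty$ is bounded on the compact $K$, this forces $\Phi(\gamma_n,\gamma_n')\to 0$. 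By compactness of $K$, pass to subsequences $\gamma_n\to\gamma_*$, $\gamma_n'\to\gamma_*'$ in $K$; continuity gives $\Phi(\gamma_*,\gamma_*')=0$, i.e.\ $\Lambda_{\gamma_*}=\Lambda_{\gamma_*'}$, hence $\gamma_*=\gamma_*'$ by \ref{e1}. But then $\|\gamma_n-\gamma_n'\|_\infty\to\|\gamma_*-\gamma_*'\|_\infty=0$, while the defining inequality and $\Phi(\gamma_n,\gamma_n')\le C$ would be compatible with that — so one needs a slightly sharper contradiction. The standard fix: normalise. Set $\kappa_n=(\gamma_n-\gamma_n')/\|\gamma_n-\gamma_n'\|_\infty\in E_D'$, a unit vector in the finite-dimensional space $E_D'$, so (after a subsequence) $\kappa_n\to\kappa_*$ with $\|\kappa_*\|_\infty=1$. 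Using analyticity of $\gamma\mapsto\Lambda_\gamma$ and a mean-value/telescoping estimate, $\Lambda_{\gamma_n}-\Lambda_{\gamma_n'}=\|\gamma_n-\gamma_n'\|_\infty\cdot\big(d\Lambda_{\gamma_n}(\kappa_n)+o(1)\big)$ along the segment, whence $\|d\Lambda_{\gamma_*}(\kappa_*)\|_{H^{1/2}\to H^{-1/2}}=0$; since $\kappa_*\neq0$ this contradicts \ref{e2}. The same normalised-sequence argument, run directly on $\kappa$, yields the second estimate: if it failed there would be $\kappa_n\in E_D'$ with $\|\kappa_n\|_\infty=1$, $\gamma_n\in K$, and $\|d\Lambda_{\gamma_n}(\kappa_n)\|\to0$; passing to a limit $\gamma_*\in K$, $\kappa_*\in E_D'$ with $\|\kappa_*\|_\infty=1$ and using continuity of $(\gamma,\kappa)\mapsto d\Lambda_\gamma(\kappa)$ in $\B(H^{1/2},H^{-1/2})$ (from the linearisation Proposition) gives $d\Lambda_{\gamma_*}(\kappa_*)=0$, again contradicting \ref{e2}.

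Two points deserve care. First, that the $\kappa$-unit-sphere in $E_D'$ is compact — this is where finite-dimensionality of $E_D$ enters decisively, and it is also why one may freely intersect with $K$ without losing compactness of the relevant index sets. Second, the continuity of $(\gamma,\kappa)\mapsto d\Lambda_\gamma(\kappa)$ as a map into $\B(H^{1/2},H^{-1/2})$: this is given by the linearisation Proposition, which states $d\Lambda_\gamma$ depends continuously on $\gamma$ and is a bounded linear operator in $\kappa$, so joint continuity on $K\times\{\|\kappa\|_\infty\le1\}$ is immediate.

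The main obstacle, if any, is purely bookkeeping: verifying that the telescoping/mean-value estimate $\Lambda_{\gamma_n}-\Lambda_{\gamma_n'}-\|\gamma_n-\gamma_n'\|_\infty\,d\Lambda_{\gamma_n}(\kappa_n)=o(\|\gamma_n-\gamma_n'\|_\infty)$ holds \emph{uniformly} over the compact $K$, which follows from the real-analyticity of $\gamma\mapsto\langle\Lambda_\gamma g,g\rangle$ together with local boundedness of the second derivative on a compact neighbourhood of $K$ in $L^\infty_+(\Omega)$; given that, everything reduces to the abstract compactness lemma of \cite{Bou13, AAS20}, which I would cite. Conceptually there is nothing deeper than ``injective + continuous on a compact $\Rightarrow$ stable,'' specialised to the two maps $\gamma\mapsto\Lambda_\gamma$ and $\kappa\mapsto d\Lambda_\gamma(\kappa)$.
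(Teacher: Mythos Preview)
Your proposal is correct and follows essentially the same approach as the paper: both reduce to the abstract ``injective $+$ continuous on a compact $\Rightarrow$ Lipschitz stable'' principle of \cite{Bou13,AAS20} for the nonlinear estimate, and both obtain the linear estimate by a direct compactness argument on $K\times\{\kappa\in E_D':\|\kappa\|_\infty\le 1\}$. The only difference is cosmetic: the paper cites \cite[Theorem~1]{AAS20} outright for the first estimate (noting $\gamma\mapsto\Lambda_\gamma$ is $C^1$), whereas you unpack that citation by writing out the normalised-sequence argument that ultimately feeds the contradiction into \ref{e2}.
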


\begin{proof}
The nonlinear stability estimate is provided by Theorem 1 in \cite{AAS20}, noting that $\gamma \mapsto \Lambda_\gamma$ is of class $C^1$ in view of Proposition \ref{linearisation}. The linear stability estimate follows from \ref{e2}, by taking an infimum over the compact set $K\times \{\kappa\in E_D: \Vert \kappa\Vert\le 1\}$.
\end{proof}

\begin{remark}\label{exponentialgrowth}
	Note that the stability constants $S_1$ and $S_2$ are obtained by compactness arguments and are thus not quantitative---however, they should be expected to grow exponentially in $D$ as $D\rightarrow \infty$, as verified in some special cases in \cite{Ron05}.
\end{remark}

The following contains the main theorem from \cite{AlSa21}, applied to our setting:

\begin{theorem}\label{smalldelta} In the setting of the previous proposition, assume in addition that $K$ is convex and that property \ref{e3} is satisfied. Then:
\begin{enumerate}[label=(\roman*)]

\item\label{smalldelta1} there exists constant $A=A(K)>0$ such that  $Q_\J$ from \eqref{defqj} satisfies
\begin{equation*}
\sup_{\gamma\in K} \Vert (1-Q_\J)d\Lambda_\gamma \Vert_{E'_D\rightarrow \B(L^2)} \le  A(K) \cdot \Delta(\J);
\end{equation*}
\item \label{smalldelta2} with $S_1,S_2$ and $A$ as in Proposition \ref{stabilityforfree} and part \ref{smalldelta1}, define
\begin{equation*}
\delta(E_D,K):= \frac{1}{2A\,\max(S_1,S_2)}
\end{equation*}
and suppose that $\Delta(\mathbf{J})\le \delta (E_D,K)$. Then  for all $\theta, \theta'
\in \Theta$ with $\gamma_\theta,\gamma_{\theta'}\in K$ (defined as in \eqref{defe}) the matrices $G_\theta,G_{\theta'}$ from \eqref{defG} satisfy
\begin{equation}\label{finitemeasurements1}
\Vert \theta - \theta' \Vert_{\infty} \le 2S_1 \Vert G_\theta - G_{\theta'} \Vert_{\R^{M\times M}}.
\end{equation}
Moreover, if $\theta \in \Theta$ with $\gamma_\theta \in K$ and $h\in \R^D$, then
\begin{equation}\label{finitemeasurements2}
\Vert h \Vert_\infty \le 2 S_2 \Vert d G_\theta(h) \Vert_{\R^{M\times M}},
\end{equation}
where $dG_\theta(h) = Q_\J d\Lambda_{\gamma_\theta}(h_11_{\Omega_1}+\dots, h_D1_{\Omega_D})$, viewed as element in $\mathrm{range}(Q_N)\equiv \R^{M\times M}$.
\end{enumerate}
\end{theorem}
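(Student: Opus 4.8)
The plan is to deduce Theorem \ref{smalldelta} from the abstract machinery of \cite{AlSa21,AAS20} via the projection estimates of Section \ref{projsec}. For part \ref{smalldelta1}, I would start from property \ref{e3}: for each $\gamma\in K$ and each basis direction $\kappa=1_{\Omega_i}$ the operator $d\Lambda_\gamma(\kappa)$ is smoothing, and the map $(\gamma,\kappa)\mapsto \vertiii{d\Lambda_\gamma(\kappa)}$ (the triple-bar norm from Lemma \ref{qerror}\ref{proj2}, built from the $L^2\to H^{s_*}$ and $H^{-s_*}\to L^2$ operator norms) is locally bounded. Since $K\times\{\kappa\in E_D':\Vert\kappa\Vert_\infty\le 1\}$ is compact, taking a supremum gives a finite constant $A(K)=\sup\{\vertiii{d\Lambda_\gamma(\kappa)}:\gamma\in K,\Vert\kappa\Vert_\infty\le 1\}$. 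Then Lemma \ref{qerror}\ref{proj2} applied to $T=d\Lambda_\gamma(\kappa)$ yields $\Vert(1-Q_\J)d\Lambda_\gamma(\kappa)\Vert_{\B(L^2)}\le \Delta(\J)\,\vertiii{d\Lambda_\gamma(\kappa)}\le A(K)\Delta(\J)\Vert\kappa\Vert_\infty$, which is precisely \ref{smalldelta1} after taking the supremum over $\gamma\in K$ and the operator norm over $\kappa\in E_D'$ (using equivalence of norms on the finite-dimensional $E_D'$, absorbed into $A$).

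For part \ref{smalldelta2}, the key observation is that $G_\theta$ is (up to the normalisation constants $(\vert J_i\vert\vert J_j\vert)^{1/2}$, which are exactly cancelled in the definition \eqref{defG}) the matrix of $Q_\J\tilde\Lambda_{\gamma_\theta}$ in the orthonormal system $\psi_i=1_{J_i}/\vert J_i\vert^{1/2}$; by Lemma \ref{qnorm} the Euclidean norm $\Vert G_\theta-G_{\theta'}\Vert_{\R^{M\times M}}$ therefore controls $\Vert Q_\J(\tilde\Lambda_{\gamma_\theta}-\tilde\Lambda_{\gamma_{\theta'}})\Vert_{\B(L^2)}$ from above, and in fact equals it for rank-$M$ range considerations, but only the upper bound is needed. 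Conversely I would write, schematically,
\begin{equation*}
\Vert\tilde\Lambda_{\gamma_\theta}-\tilde\Lambda_{\gamma_{\theta'}}\Vert_{\B(L^2)}\le \Vert Q_\J(\tilde\Lambda_{\gamma_\theta}-\tilde\Lambda_{\gamma_{\theta'}})\Vert_{\B(L^2)}+\Vert(1-Q_\J)(\tilde\Lambda_{\gamma_\theta}-\tilde\Lambda_{\gamma_{\theta'}})\Vert_{\B(L^2)}.
\end{equation*}
The second term is the obstacle and where the argument of \cite{AlSa21} enters: one writes $\tilde\Lambda_{\gamma_\theta}-\tilde\Lambda_{\gamma_{\theta'}}=\int_0^1 d\Lambda_{\gamma_{\theta'}+t(\gamma_\theta-\gamma_{\theta'})}(\gamma_\theta-\gamma_{\theta'})\,dt$ (valid since $K$ is convex, so the segment stays in $K$), applies $(1-Q_\J)$ inside the integral and uses part \ref{smalldelta1} to bound it by $A(K)\Delta(\J)\Vert\gamma_\theta-\gamma_{\theta'}\Vert_\infty$. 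Combining with the nonlinear stability estimate $\Vert\gamma_\theta-\gamma_{\theta'}\Vert_\infty\le S_1\Vert\tilde\Lambda_{\gamma_\theta}-\tilde\Lambda_{\gamma_{\theta'}}\Vert_{\B(L^2)}$ from Proposition \ref{stabilityforfree} (note $\tilde\Lambda_\gamma-\tilde\Lambda_{\gamma'}=\Lambda_\gamma-\Lambda_{\gamma'}$ and the $H^{1/2}\to H^{-1/2}$ norm is dominated by the $\B(L^2)$ norm of a smoothing operator), one gets
\begin{equation*}
\Vert\gamma_\theta-\gamma_{\theta'}\Vert_\infty\le S_1\Vert Q_\J(\cdots)\Vert_{\B(L^2)}+S_1 A(K)\Delta(\J)\Vert\gamma_\theta-\gamma_{\theta'}\Vert_\infty,
\end{equation*}
and the condition $\Delta(\J)\le \delta(E_D,K)=1/(2A\max(S_1,S_2))$ makes the last coefficient $\le 1/2$, so it can be absorbed into the left side, leaving $\Vert\gamma_\theta-\gamma_{\theta'}\Vert_\infty\le 2S_1\Vert Q_\J(\cdots)\Vert_{\B(L^2)}\le 2S_1\Vert G_\theta-G_{\theta'}\Vert_{\R^{M\times M}}$, which is \eqref{finitemeasurements1} after identifying $\Vert\theta-\theta'\Vert_\infty=\Vert\gamma_\theta-\gamma_{\theta'}\Vert_\infty$.

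For \eqref{finitemeasurements2} I would run the identical argument at the linearised level: $dG_\theta(h)=Q_\J d\Lambda_{\gamma_\theta}(\kappa_h)$ with $\kappa_h=\sum h_i1_{\Omega_i}$, so $\Vert dG_\theta(h)\Vert_{\R^{M\times M}}\ge \Vert d\Lambda_{\gamma_\theta}(\kappa_h)\Vert_{\B(L^2)}-\Vert(1-Q_\J)d\Lambda_{\gamma_\theta}(\kappa_h)\Vert_{\B(L^2)}\ge \Vert d\Lambda_{\gamma_\theta}(\kappa_h)\Vert-A(K)\Delta(\J)\Vert h\Vert_\infty$, and the linear stability estimate $\Vert h\Vert_\infty=\Vert\kappa_h\Vert_\infty\le S_2\Vert d\Lambda_{\gamma_\theta}(\kappa_h)\Vert_{H^{1/2}\to H^{-1/2}}\le S_2\Vert d\Lambda_{\gamma_\theta}(\kappa_h)\Vert_{\B(L^2)}$ together with $\Delta(\J)\le 1/(2AS_2)$ again absorbs the error term, yielding $\Vert h\Vert_\infty\le 2S_2\Vert dG_\theta(h)\Vert_{\R^{M\times M}}$. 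The only genuinely delicate points are the bookkeeping of which Sobolev/operator norms dominate which (ensuring the smoothing operators let one pass freely between $\B(L^2)$ and $\B(H^{1/2},H^{-1/2})$), and checking that Lemma \ref{qnorm} identifies $\Vert G_\theta-G_{\theta'}\Vert_{\R^{M\times M}}$ with the $\B(L^2)$-norm of the compressed operator rather than merely bounding it — but both are contained in, or immediate from, the results already established, so this is essentially an assembly of Lemmas \ref{qnorm}, \ref{qerror}, Proposition \ref{stabilityforfree} and the fundamental theorem of calculus along the segments of the convex set $K$.
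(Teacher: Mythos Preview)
Your proposal is correct and follows essentially the same approach as the paper. The only difference is presentational: for the nonlinear estimate \eqref{finitemeasurements1} the paper cites Theorem 2(ii) of \cite{AlSa21} as a black box (with a remark explaining the minor variant needed), whereas you unpack that argument explicitly via the fundamental theorem of calculus along the segment in the convex set $K$ and the absorption trick---this is precisely the mechanism behind the cited result, so your version is simply more self-contained.
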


\begin{proof}
By Lemma \ref{qerror}, for all  $\gamma \in E_D$ and $\kappa \in E_D'$ it holds that
\begin{equation*}
\Vert (I- Q_\J)d \Lambda_\gamma(\kappa)\Vert_{\B(L^2)} \le   \Delta(\J) \cdot \vertiii{d\Lambda_\gamma(\kappa)}.
\end{equation*}
Set $A(K)= \sup \vertiii{d\Lambda_{\gamma}(\kappa)}$, where the supremum is taken over all $(\gamma,\kappa)$ with $\gamma \in K$ and $\Vert \kappa \Vert \le 1$; this is finite by property \ref{e3} and \ref{smalldelta1} is proved.

Next, by Theorem 2(ii) in \cite{AlSa21}, if $\Delta(\J) \le 1/(2AS_1)$ and $\gamma,\gamma' \in K$, then 
\begin{equation*}
\Vert \gamma- \gamma' \Vert_{L^\infty} \le 2 S_1 \Vert Q_\J \tilde \Lambda_\gamma - Q_\J \tilde  \Lambda_{\gamma'} \Vert_{L^2\rightarrow L^2}
\end{equation*}
If $\gamma = \gamma_\theta$ and $\gamma'=\gamma_{\theta'}$, then the norm on the right hand side equals $\Vert G_\theta - G_{\theta'} \Vert_{\R^{M\times M}}$ by Lemma \ref{qnorm} and \eqref{finitemeasurements1}
follows.
Let $\kappa = h_1 1_{\Omega_1} +
\dots + h_D1_{\Omega_D} $. Then by Lemma \ref{qnorm} we have
\begin{equation*}
\Vert dG_\theta(h) \Vert_{\R^{M\times M}} \ge \Vert Q_\J d\Lambda_\gamma(\kappa) \Vert_{\B(L^2)} \ge \Vert d \Lambda_\gamma(\kappa)\Vert_{\B(L^2)} - \Vert (I-Q_\J)d\Lambda_\gamma (\kappa)\Vert_{\B(L^2)}.
\end{equation*}
By Proposition \ref{stabilityforfree} and part \ref{smalldelta1}, this can be bounded from below by
\begin{equation*}
\ge \left( S_2^{-1} - A \Delta(\J)\right) \Vert h \Vert_\infty
\end{equation*}
and if $\Delta(\J) \le 1/(2AS_2)$, we get the desired estimate \eqref{finitemeasurements2}.
\end{proof}

\begin{remark} In fact, the preceding proof requires a slight variant of \cite[Theorem 2]{AlSa21}, differing as follows: First, it is {\it not} necessary for the forward map to be globally Lipschitz if one is only interested in stability for $\gamma,\gamma'\in K$ -- in this case the additional approximation step in the proof becomes obsolete. Second, Hypothesis 1 of that theorem is only needed to obtain part (i) of the theorem and we achieve the corresponding bound by means of Lemma \ref{qerror}. Part (ii) of the theorem, which is needed above, is proved independently and (in their notation) for fixed $N$ such that $s_N\le 1/2C$. We apply the result for fixed $\mathbf{J}$ such that $A\Delta(\mathbf{J})\le 1/2S_1$, which is nothing but a change in notation.

\end{remark}

\section{Proof of the main theorems} 
In this section, we connect to the statistical setting and prove the main theorems.

\subsection{DQM property \& information operator}Suppose a statistical experiment $\mathcal{P}=\left(P_\theta:\theta\in \Theta\right )$ with parameter space $\Theta\subset \R^D$ describes a random design regression model with Gaussian noise, arising from a forward map
\begin{equation*}
G\colon \Theta\rightarrow L^2_\lambda(\X,V),\quad \theta\mapsto G_\theta
\end{equation*}
as in \textsection\ref{mainresult} -- at this point there is no need to specify to the Calder{\'o}n problem. Then differentiability of $G$ can be leveraged to show that $\mathcal{P}$ is {\it differentiable in quadratic mean (DQM)} (that is, \eqref{dqm3} below). For $h\in \R^D$ we write $\Vert h \Vert$ for any of the equivalent norms of $\R^D$.

\begin{proposition}\label{dqm} Assume that $U=\sup_{\theta \in \Theta} \Vert G_\theta \Vert_\infty<\infty$ and let $\theta$ be an interior point of $\Theta$. Suppose there exists a bounded linear operator $\I_\theta\colon \R^D\rightarrow L^2_\lambda(\X,V)$ such that
for all $x\in \mathcal{X}$ we have
\begin{equation}\label{dqm1}
\vert G_{\theta+h}(x) - G_\theta(x) -\I_\theta[h]\vert_V = o(\Vert h \Vert)\quad \text{ as } \Vert h \Vert \rightarrow 0.
\end{equation}
Suppose further that there exists constants $\epsilon, B>0$ such that 
\begin{equation}\label{dqm2}
\Vert G_{\theta + h} - G_\theta \Vert_\infty \le B \Vert h \Vert \quad \text{ for } \Vert h \Vert < \epsilon.
\end{equation}
Then $\mathcal{P}$ satisfies the DQM property at $\theta$ in the following sense: Define
\begin{equation}\label{score}
\mathbb{A}_\theta\colon \R^D\rightarrow L^2(V\times \X,P_\theta),\quad \mathbb{A}_\theta[h](y,x)=\langle y - G_\theta(x) ,\I_\theta[h](x)\rangle_V,
\end{equation}
then
\begin{equation}\label{dqm3}
\int_{V\times \X}\left[dP_{\theta+h}^{1/2} - dP_\theta^{1/2} - \frac12 \mathbb{A}_\theta[h] dP^{1/2}_\theta \right]^2 =o(\Vert h \Vert^2),\quad \text{ as } \Vert h \Vert \rightarrow 0.
\end{equation}
\end{proposition}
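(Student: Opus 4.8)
The plan is to verify the DQM property directly from the definition of the Hellinger-type integral in \eqref{dqm3}, exploiting the explicit Gaussian structure of the regression model. First I would write out the densities: since $P_\theta$ is the law of $(Y,X)$ with $X\sim\lambda$ and $Y=G_\theta(X)+\epsilon$, $\epsilon\sim\mathcal N_M(0,I)$, the density of $P_\theta$ with respect to $\text{Leb}\otimes\lambda$ is $p_\theta(y,x)=(2\pi)^{-M/2}\exp(-\frac12|y-G_\theta(x)|_V^2)$. Hence $dP_\theta^{1/2}=p_\theta^{1/2}$ and the square-root density is a smooth function of $G_\theta(x)$. The integral in \eqref{dqm3} then splits over $x\in\X$ (a finite sum, since $\X=\{1,\dots,M\}$, each term weighted by $\lambda(\{x\})=1/M$), and for each fixed $x$ becomes a Gaussian integral in $y$ that can be computed or bounded explicitly in terms of $a:=G_{\theta+h}(x)-G_\theta(x)$ and $b:=\I_\theta[h](x)$.

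The key steps, in order: (i) Fix $x$ and reduce to a one-dimensional (or $M$-dimensional Gaussian) computation; the map $v\mapsto q_v:=$ density of $\mathcal N_M(v,I)$ has $q_v^{1/2}$ Fr\'echet differentiable in $v\in\R^M$ with derivative $\frac12\langle\cdot - v,\dot v\rangle q_v^{1/2}$, and this differentiability is \emph{locally uniform} with a second-order remainder controlled by a Gaussian-integrable envelope. Concretely, one shows that for the shift family $\int[q_{v+a}^{1/2}-q_v^{1/2}-\frac12\langle y-v,a\rangle q_v^{1/2}]^2\,dy = O(|a|^2)\cdot\omega(|a|)$ where $\omega\to 0$, by expanding the exponential or by the standard identity for the Hellinger distance between Gaussians. (ii) Insert $a=G_{\theta+h}(x)-G_\theta(x)$: by \eqref{dqm2} we have $|a|\le B\|h\|\to 0$ uniformly in $x$, so the remainder from step (i) is $o(\|h\|^2)$ uniformly in the finitely many $x$. (iii) Replace the "ideal" linear term $\langle y-G_\theta(x),G_{\theta+h}(x)-G_\theta(x)\rangle$ by $\langle y-G_\theta(x),\I_\theta[h](x)\rangle=\mathbb A_\theta[h](y,x)$: the difference is $\langle y-G_\theta(x),G_{\theta+h}(x)-G_\theta(x)-\I_\theta[h](x)\rangle$, whose square integrates (in $y$) to $|G_{\theta+h}(x)-G_\theta(x)-\I_\theta[h](x)|_V^2$, which is $o(\|h\|^2)$ by \eqref{dqm1}, again uniformly over the finite set $\X$. (iv) Combine via the triangle inequality in $L^2(V\times\X,\,\cdot\,)$: the $L^2$-norm of $dP_{\theta+h}^{1/2}-dP_\theta^{1/2}-\frac12\mathbb A_\theta[h]dP_\theta^{1/2}$ is bounded by the sum of the norms from (ii) and (iii), both $o(\|h\|)$, giving \eqref{dqm3}. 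The hypothesis $U<\infty$ (together with \eqref{dqm2}) is what makes all the Gaussian-moment bounds uniform in $\theta+h$ and $x$ over the relevant range.

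The main obstacle is step (i): making the second-order Taylor expansion of $v\mapsto q_v^{1/2}$ rigorous with an \emph{integrable, $h$-independent} remainder, so that the $o(\cdot)$ is genuinely uniform rather than merely pointwise. This is the classical technical heart of any DQM verification; here it is mild because the model is an exact Gaussian shift experiment (indexed by $x$), for which the Hellinger affinity $\int q_{v}^{1/2}q_{v'}^{1/2}\,dy=\exp(-\tfrac18|v-v'|^2)$ is available in closed form, reducing step (i) to elementary estimates on $1-e^{-t}$ near $t=0$ and on Gaussian moments $\int|y-v|^2 q_v\,dy=M$. Everything else — the finiteness of $\X$, the uniform smallness of $a$ from \eqref{dqm2}, and the first-order approximation \eqref{dqm1} — feeds in routinely. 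One should also record that \eqref{dqm3} legitimises the definitions of the score $\A_\theta$ (the $(\R^D)^*$-valued map $h\mapsto\mathbb A_\theta[h]$) and the information matrix $\N_\theta=E_\theta[\A_\theta^T\A_\theta]$, i.e.\ $\N_\theta=E_\theta\langle\I_\theta[\cdot],\I_\theta[\cdot]\rangle_V$, but that is bookkeeping once \eqref{dqm3} is in hand.
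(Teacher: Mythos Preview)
Your proposal is correct, but it takes a different route from the paper. You decompose the Hellinger remainder via the triangle inequality into (a) the ``pure Gaussian shift'' term, handled by the closed-form affinity $\int q_v^{1/2}q_{v'}^{1/2}\,dy=\exp(-\tfrac18|v-v'|^2)$, and (b) the linearisation error $\langle y-G_\theta(x),\,G_{\theta+h}(x)-G_\theta(x)-\I_\theta[h](x)\rangle$, whose $L^2(dy)$-norm is exactly the pointwise remainder in \eqref{dqm1}. The paper instead writes $dP_{\theta+h}^{1/2}/dP_\theta^{1/2}=e^{f_h}$, observes that for each fixed $(y,x)$ the map $h\mapsto e^{f_h(y,x)}$ is Fr\'echet differentiable at $0$ with derivative $\tfrac12\A_\theta(y,x)$, and then applies dominated convergence directly to the whole integrand, with the $h$-independent envelope $\exp(\tfrac{B}{2}|y|_V+C)$ furnished by \eqref{dqm2} and the bound $U<\infty$. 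Your approach is more explicit and makes the quadratic-in-$|a|$ order of the Gaussian remainder visible; the paper's is shorter and avoids the Hellinger computation entirely. One caveat: your step (iii) leans on $\mathcal X$ being finite to turn the pointwise $o(\|h\|)$ in \eqref{dqm1} into a uniform one. The proposition as stated allows a general probability space $(\mathcal X,\lambda)$; the paper's dominated-convergence argument covers this directly, whereas your version would need an additional DCT step over $x$ (using that $\I_\theta$ ranges in a finite-dimensional image, so $|\I_\theta[h](x)|\le C\|h\|\,g(x)$ for some $g\in L^2_\lambda$). For the Calder\'on application this is moot since $\mathcal X=\{1,\dots,M\}$.
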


\begin{definition}\label{definfo} The map $\mathbb{A}_\theta\colon \R^D\rightarrow L^2(V\times \X, P_\theta)$ from \eqref{score} is called {\it score operator} of $\mathcal{P}$ at $\theta$. Further, the {\it information operator/matrix} of $\mathcal{P}$ at $\theta$ is defined as
\begin{equation*}
\N_\theta = \A_\theta^*\A_\theta \in \B(\R^D)\equiv \R^{D\times D},
\end{equation*}
where the adjoint of $\A_\theta$ is formed with respect to the natural inner products on $\R^D$ and $L^2(V\times \X,P_\theta)$.
\end{definition}

\begin{remark}\label{vdv}
For $(Y,X)\sim P_\theta$ we may consider $\A_\theta(Y,X)$ as random element in $(\R^D)^*$ (the dual space of $\R^D$, say, viewed as `row vectors'). The transpose $\A_\theta^T(Y,X)\in \R^D$ equals the score $\dot \ell_\theta\equiv \dot \ell_\theta(Y,X)$ from \cite{vdV98}; another description of the information matrix is thus
\begin{equation*}
\N_\theta = E_\theta[\dot \ell_\theta(Y,X)\dot\ell_\theta^T(Y,X)],
\end{equation*}
which coincides with the definition on p.39 of \cite{vdV98}.
\end{remark}

\begin{remark}\label{nfromi} Proposition \ref{dqm} extends to the infinite-dimensional setting from \cite{NiPa21} and yields a result similar to Theorem 1 in that article -- the difference is that here we consider total instead of partial derivatives.
 Moreover, Proposition 1 of \cite{NiPa21} also applies here, yielding yet another representation of the information matrix: With $\I_\theta \colon \R^D\rightarrow L^2_\lambda(\X,V)$ as in \eqref{dqm1}, it holds that
\begin{equation*}
\N_\theta = \I_\theta^*\I_\theta,
\end{equation*}
again taking adjoints with respect to the natural inner products.
\end{remark}

\begin{proof}[Proof of Proposition \ref{dqm}]
Let
\begin{eqnarray*}
f_h(y,x) &:=& \log\left( dP_{\theta+ h }^{1/2}/dP_\theta^{1/2}(y,x)\right)\\
&=& \frac{1}{2} \langle y, G_{\theta + h}(x) - G_\theta(x)\rangle_V - \frac 14 \left(\vert G_{\theta+h}(x) \vert_V^2 - \vert G_\theta(x)\vert_V^2\right).
\end{eqnarray*}
The proposition is proved, once we show that the integral
\begin{equation*}
\int_{V\times \X}\left[\frac{e^{f_h} -1 - \frac 12 \mathbb{A}_\theta[h]}{\Vert h \Vert} \right]^2 dP_\theta
\end{equation*}
vanishes in the limit $\Vert h \Vert \rightarrow 0$. For fixed $(y,x)\in V\times \mathcal{X}$, the integrant vanishes in the limit, as $h\mapsto \exp(f_h(y,x))$ is Fr{\'e}chet differentiable at $0$ with derivative $\frac 12 \A_\theta(y,x)$; this follows from \eqref{dqm1} and the chain rule. Further, by the convexity of $\exp$, the Cauchy-Schwarz inequality and property \eqref{dqm2}, it holds for all $0<\Vert h \Vert <\epsilon$ that
\begin{equation*}
\frac{\vert e^{f_h(y,x)} - 1 \vert}{\Vert h \Vert} \le \exp\left(\frac {\vert f_h(y,x) \vert}{\Vert h \Vert}  \right)  - 1 \le \exp\left(\frac B 2 \vert y \vert_V+ \frac 12 U^2  \right), 
\end{equation*}
which is $L^2$-integrable with respect to $P_\theta$.
Also $\A_\theta[h]$ is in $L^2(P_\theta)$, with norm $\lesssim \Vert h \Vert$. The result  then follows by the dominated convergence theorem.
\end{proof}

\subsection{Proof of the main theorems} We make the identifications
\begin{equation*}
\mathrm{range}(P_\mathbf{J})\equiv \R^M,\quad \mathrm{range}(Q_\mathbf{J})\equiv \R^{M\times M} \equiv L_\lambda^2(\mathcal{X},\R^M),
\end{equation*}
where the projections $P_\J$ and $Q_\J$ are defined as in \textsection \ref{projsec} and the $L^2$-space is as in Definition \ref{modeldefinition}. With $\gamma_\theta$ as in \eqref{defe}, the forward map $G$ can then be written as
\begin{equation*}
G\colon \Theta\rightarrow L^2_\lambda(\mathcal{X},\R^M),\quad G_\theta = Q_\J \tilde \Lambda_{\gamma{\theta}},
\end{equation*}
where $\tilde \Lambda_\gamma$ is the normalised Dirichlet-to-Neumann operator from \eqref{normaliseddtn}. By Propositon \ref{linearisation} it follows that $\theta\mapsto G_\theta$ is differentiable in the sense of Propostion \ref{dqm}, with
\begin{equation*}
\I_\theta(h)\equiv dG_\theta(h)= Q_\J d\Lambda_{\gamma_\theta}(\kappa),\quad \kappa=h_11_{\Omega_1}+\dots + h_D1_{\Omega_D}.
\end{equation*}
In particular, the model $\mathcal{P}$ defined in \textsection \ref{mainresult} is differentiable in quadratic mean and score and information operators are well defined.

\begin{proof}[Proof of Theorem \ref{informationinvertible}]
By Theorem \ref{smalldelta}\ref{smalldelta2}, the derivative $\I_\theta\colon \R^D\rightarrow   L^2_\lambda(\mathcal{X},\R^M) \equiv \R^{M\times M}$ is injective and thus also $\I_\theta^*\I_\theta\in \R^{D\times D}$ is injective. By Remark \ref{nfromi} this equals $\N_\theta$ and as we are in finite dimension, the information operator must be invertible.
\end{proof}

\begin{proof}[Proof of Theorem \ref{bvmthm}]
Under the identification from Remark \ref{vdv}, the conclusion of the theorem equals that of Theorem 10.1 in \cite{vdV98}; it thus remains to check the prerequisites of the latter theorem. The DQM property and invertibility of the information matrix  follow from Proposition \ref{dqm} and  Theorem \ref{informationinvertible}. The separation condition is satisfied in view of Lemma 10.6 in \cite{vdV98} (see also the discussion preceding it), the compactness of $\Theta$ and Theorem \ref{smalldelta}\ref{smalldelta1}, which implies identifiability of the model.
\end{proof}

\noindent {\bf Acknowledgements.} My sincere thanks go to Richard Nickl for several discussions during the preparation of this article and for pointing out Remark \ref{mean}. Further, I would like to thank Giovanni Alberti and Bastian Harrach  for conversations during the AAIP21 workshop in Klagenfurt; these were the starting point of this note. This research was supported by the EPSRC Centre for
Doctoral Training and the Munro--Greaves Bursary of Queens’ College Cambridge.

\bibliographystyle{plain}
\bibliography{fdcalderon2.bbl}

\end{document}